\documentclass[twoside,11pt]{article}

%

%
%
%

\usepackage[preprint]{jmlr2e}
\usepackage[english]{babel}
\usepackage[utf8]{inputenc}
\usepackage[T1]{fontenc}

\usepackage{amsmath, enumitem, xcolor}

\usepackage{pgfplots}
\usepackage{tikz}
\allowdisplaybreaks


\newcommand{\calF}{\mathcal{F}}

\newcommand{\calX}{\mathcal{X}}

\newcommand{\bbH}{\mathbb{H}}

\newcommand{\bbR}{\mathbb{R}}

\newcommand{\KL}{D_{\mathrm{KL}}}

\newcommand{\eps}{\epsilon}

\newcommand{\bs}[1]{\boldsymbol{#1}}

\DeclareMathOperator{\cov}{cov}

\DeclareMathOperator{\E}{E}
\let\P\undefined
\DeclareMathOperator{\P}{P}

\DeclareMathOperator{\tr}{tr}

\newcommand{\ff}{{\bs{f\!f}}}

\newcommand{\ip}[1]{\langle #1 \rangle}
\newcommand{\qv}[1]{\left\langle #1 \right\rangle}

\newcommand{\citeb}[1]{\citeauthor{#1}, \citeyear{#1}}

\jmlrheading{23}{2022}{1-26}{09/2021; Revised 03/2022}{??}{??}{Dennis Nieman, Botond Szabo and Harry van Zanten}


\ShortHeadings{Contraction rates for variational GP regression}{Nieman, Szabo and van Zanten}
\firstpageno{1}

\begin{document}

\title{Contraction rates for sparse variational approximations in Gaussian process
regression}

\author{\name Dennis Nieman\footnote{corresponding author} \email d.nieman@vu.nl \\
       \addr Department of Mathematics\\
       Vrije Universiteit Amsterdam\\
De Boelelaan 1111, 1081 HV Amsterdam\\
The Netherlands
       \AND
       \name Botond Szabo \email botond.szabo@unibocconi.it\\
       \addr Department of Decision Sciences,\\
Bocconi Institute for Data Science and Analytics,\\
       Bocconi University\\
       Via Roentgen 1, Milano, Italy
\AND 
\name Harry van Zanten \email j.h.van.zanten@vu.nl \\
       \addr Department of Mathematics\\
       Vrije Universiteit Amsterdam\\
De Boelelaan 1111, 1081 HV Amsterdam\\
The Netherlands}

\editor{Marc Peter Deisenroth}

\maketitle

\begin{abstract}
We study the theoretical properties of a variational Bayes method in the Gaussian Process regression model. We consider the inducing variables method introduced by \cite{titsias2009a} and derive sufficient conditions for obtaining contraction rates for the corresponding variational Bayes (VB) posterior. As examples we show that for three particular covariance kernels (Matérn, squared exponential, random series prior) the VB approach can achieve optimal, minimax contraction rates for a sufficiently large number of appropriately chosen inducing variables. The theoretical findings are demonstrated by numerical experiments.\\
\end{abstract}

\begin{keywords}
Variational Bayes, Gaussian Process regression, inducing variables, contraction rates
\end{keywords}

\let\thefootnote\relax\footnotetext{\makebox[0cm][l]{\makebox[-.2cm][r]{$*$}}Corresponding author.}

\section{Introduction} \label{sec: intro}
Suppose we observe $n$ independent pairs $(x_1, y_1), \ldots, (x_n, y_n)$, where each $x_i$ 
has distribution $G$ on a subset $\calX \subseteq \bbR^d$ and 
\begin{equation}\label{eq: setting}
y_i = f(x_i) + \varepsilon_i,\qquad i =1, \ldots, n,
\end{equation}
with an unknown function $f: \calX \to \bbR$ and $\varepsilon_1, \ldots, \varepsilon_n$ independent Gaussian variables with mean zero and variance $\sigma^2$. In Gaussian Process (GP) regression we model $f$ a-priori as a centered GP with covariance function $k : \calX \times \calX \to \bbR$.
GP regression has become popular due to the explicit expressions for the posterior (also a GP, see e.g.\ \citeb{Rasmussen2006} and Section \ref{sec: variational} ahead) and the marginal likelihood, and the ease with which uncertainty quantification can be obtained. Moreover, there exist mathematical guarantees for consistency, optimal contraction rates, and validity of uncertainty quantification (e.g. \citeb{vaart2008}; \citeb{sniekers2015}; \citeb{rousseau2017}).

A drawback of plain GP regression is the fact that computation of the posterior requires inversion of an $n\times n$ matrix, which becomes computationally demanding for large sample size $n$. The computational cost typically scales as $n^3$, which can be prohibitive in practice. To alleviate the computational burden, reduced rank approximations are often employed; see for instance Chapter 8 of \cite{Rasmussen2006} and the more recent overview in \cite{liu2020}. These approximations somehow summarise the posterior using $m \ll n$ variables instead of $n$, typically reducing the order of the computational cost from $n^3$ to $nm^2$. 

In this paper we consider the {variational approximation} proposed by \cite{titsias2009a}. This approach uses $m$ so-called {inducing variables} to summarise the posterior (details are given in the next section). It is a true variational Bayes procedure, in the sense that the approximate posterior minimises the Kullback-Leibler (KL) divergence between the true posterior and a parametrised family of approximating distributions. 

While the computational aspects of low rank approximations are well understood, little is known about whether the mathematical guarantees for the true posterior carry over to the approximate posterior. \cite{burt2019} analyse the expected KL-divergence between the posterior and its variational approximation. In particular, they investigate in various cases how large the number of inducing variables $m$ should be chosen in relation to the sample size $n$ in order to ensure that the expected KL-divergence vanishes as $n$ becomes large. However, since the expectation that is considered is computed both over the data $(\bs{x},\bs{y})$ and over the prior on $f$, these results do not translate to (frequentist) guarantees about consistency and contraction rates, which assume that the data is generated from a fixed, ``true'' regression function $f_0$.

In this paper we derive contraction rates for the approximate posterior in this frequentist setup. This makes it possible to compare rates with known minimax lower bounds, which 
explain what the best possible contraction rates are and how these depend on global characteristics of the true regression function $f_0$, like its degree of smoothness. This in turn gives insight into how $m$ should be chosen in order for the variational posterior to have the same contraction rate as the true posterior.

Our findings can be summarised as follows:
\begin{enumerate}[label=(\roman*)]
\item
In order to have an optimal rate of contraction of the variational posterior 
around the true regression function $f_0$, it is not necessary that 
the KL-divergence between the true posterior and the variational approximation 
vanishes as $n \to \infty$.  
\item
For appropriately chosen inducing variables, one can recover an $\alpha$-smooth regression function $f_0$ at the optimal rate with the VB method using the Mat\'ern kernel or a series kernel with regularity hyper-parameter $\alpha$ if the number of inducing variables $m$ scales at least as $n^{d/(d+2\alpha)}$.
\item
These inducing variable VB methods also result in minimax contraction rate around $\alpha$-smooth regression functions $f_0$ for GP priors with squared exponential covariance kernel (in $d=1$) if the number of inducing variables $m$ scales at least as $n^{1/(1+2\alpha)}\log n$.
\item
Choosing fewer inducing points than the optimal number can result in overly smooth posterior means and conservative, sub-optimally large credible sets; see the numerical study in Section \ref{sec:num}.
\end{enumerate}

The remainder of the paper is organised as follows. In Section \ref{sec: variational} we recall the inducing variable variational Bayes method by \cite{titsias2009a}. Next in Section \ref{sec: GP:contraction} we briefly discuss contraction rate results for GP posteriors following \cite{vaart2008}. A more detailed description of the frequentist analysis of general (nonparametric) posteriors are given in Appendix A. The main results are presented in Section \ref{sec:main} where sufficient conditions are given on the GP and the inducing variables to obtain the contraction rate of the corresponding VB posterior.  In Sections \ref{sec:empeigen} and \ref{sec:opeigen} two specific choices of the inducing variables are described, from the eigendecompositions of respectively the covariance matrix and the covariance operator. We show in Section \ref{sec:examples} that these approaches result in rate optimal VB posterior contraction rates for the squared exponential, Mat\'ern and series covariance kernels, matching the optimal behaviour of the appropriately scaled  true posterior. Finally we conclude our results with a brief numerical study in Section \ref{sec:num}.

\subsection{Notation} For two positive sequences $a_n,b_n$ we use the notation $a_n\lesssim b_n$ if there exists a positive constant $C$ such that $a_n\leq C b_n$ for all $n$. We write $a_n\asymp b_n$ if $a_n\lesssim b_n$ and $b_n\lesssim a_n$ are satisfied simultaneously. We denote by $\tr$ the trace operator and by $\KL(\mu,\nu)$ the Kullback-Leibler divergence between the measures $\mu$ and $\nu$. The norm $\|\cdot\|$ denotes the Euclidean norm for vectors and the spectral/operator norm for matrices. By $L^2(\calX,G)$ we denote the space of (almost sure equivalence classes of) Borel measurable real-valued functions $f$ on $\calX$ such that $\|f\|^2_{2,G} := \int_\calX |f|^2 \,dG$ is finite.

\section{Inducing variables variational Bayes} \label{sec: variational}

In this section we recall the sparse GP regression approach of \cite{titsias2009a}, introducing the notation that we use throughout the paper. 

In the regression model \eqref{eq: setting}, if a centered GP prior $\Pi$ with covariance kernel $k$ is used, then the true posterior 
is again a GP, with mean and covariance function  given by 
\begin{gather*}
x \mapsto K_{x\bs{f}}(\sigma^2I + K_\ff)^{-1} \bs{y}, \\
(x,y) \mapsto k(x,y) - K_{x\bs{f}}(\sigma^2I + K_\ff)^{-1}K_{\bs{f}y},
\end{gather*}
respectively. Here we denote $\bs{y} = (y_1, \ldots, y_n)$, $\bs{f}=(f(x_1),\ldots,f(x_n))$, 
\begin{gather}
K_{x\bs{f}} = \cov_\Pi(f(x),\bs{f}) = (k(x, x_1), \ldots, k(x, x_n)) = K_{\bs{f}x}^T, \nonumber \\
K_\ff = \cov_\Pi(\bs{f},\bs{f}) = [k(x_i, x_j)]_{1\leq i,j \leq n}, \label{e:kff}
\end{gather}
where we emphasise through the subscript $\Pi$ that the covariances are computed under the prior $\Pi$ (and not (also) the distribution $G$ of the design points). We denote the posterior probability kernel by $\Pi(\,\cdot \mid \bs{x},\bs{y})$.

The idea of \cite{titsias2009a} is to summarise the true posterior through a collection of {inducing variables} $u_1,\ldots,u_m \in L^2(\Pi)$, which 
by definition are continuous linear functionals of the prior process on $f$. By the linearity assumption, the prior process $f$ conditional on $\bs{u} = (u_1,\ldots,u_m)$ is again a GP, with  mean and covariance function
given by 
\begin{gather}
x \mapsto K_{x\bs{u}}K_{\bs{uu}}^{-1}\bs{u}, \label{eq: fgu1}\\
(x,y) \mapsto k(x,y) - K_{x\bs{u}}K_{\bs{uu}}^{-1}K_{\bs{u}y}, \label{eq: fgu2}
\end{gather}
where $K_{x\bs{u}} = \cov_\Pi(f(x),\bs{u}) = K_{\bs{u}x}^T$ and $K_{\bs{uu}} = [\cov_\Pi(u_i,u_j)]_{1 \leq i,j \leq m}$. 
This motivates the construction of 
a variational family of measures approximating the posterior by postulating that the vector $\bs{u}$ has a Gaussian distribution with some mean $\mu \in \bbR^m$ and $m\times m$ covariance matrix $\Sigma$, and that the conditional $f\,|\, \bs{u}$ is the GP law given by 
\eqref{eq: fgu1}-\eqref{eq: fgu2}. This results in a variational family of GP laws indexed by variational parameters $\mu$ and $\Sigma$. Explicitly, for fixed $\mu$ and $\Sigma$, the variational approximation to the posterior is a GP with mean and covariance function given by 
\begin{gather*}
x \mapsto K_{x\bs{u}}K_{\bs{uu}}^{-1}{\mu},\\
(x,y) \mapsto k(x,y)-K_{x\bs{u}}K_{\bs{uu}}^{-1}(K_{\bs{uu}}-\Sigma)K_{\bs{uu}}^{-1}K_{\bs{u}y},
\end{gather*}
cf. also equation (2) in \cite{burt2019}. We denote this member of the variational family by $\Psi_{\mu, \Sigma}(\,\cdot \mid \bs{x},\bs{y})$.

It can be shown that for all $\mu$ and $\Sigma$, the approximation $\Psi_{\mu, \Sigma}(\,\cdot \mid \bs{x},\bs{y})$ and the true posterior $\Pi(\,\cdot \mid \bs{x},\bs{y})$ are equivalent measures (the Radon-Nikodym derivative reduces to a finite-dimensional Gaussian derivative, a function of only $m+n$ variables). Hence their Kullback-Leibler divergence is well defined. \cite{titsias2009} proves that there exist optimal $\mu'$ and $\Sigma'$ such that
\begin{multline}\label{eq: titsias}
\inf_{\mu, \Sigma} \KL \Big(\Psi_{\mu, \Sigma}(\,\cdot \mid \bs{x},\bs{y}) \,\Big\|\, \Pi(\,\cdot\mid \bs{x},\bs{y})\Big) = 
\KL \Big(\Psi_{\mu', \Sigma'}(\,\cdot\mid \bs{x},\bs{y}) \,\Big\|\, \Pi(\,\cdot\mid \bs{x},\bs{y})\Big)\\
 = \frac12 \Big(\bs{y}^T(Q_n^{-1}- K_n^{-1})\bs{y} + \log\frac{|Q_n|}{|K_n|}
+\frac1{\sigma^2}\tr(K_n-Q_n)\Big). 
\end{multline}
Here $K_n = \sigma^2I + K_\ff$ and $Q_n = \sigma^2I + Q_\ff$, where
\begin{equation}\label{e:qff}
Q_\ff = K_{\bs{fu}}K^{-1}_{\bs{uu}}K_{\bs{uf}}
\end{equation}
with $K_{\bs{uf}} = \cov_\Pi(\bs{u},\bs{f})$. Even though in \cite{titsias2009} the considered distributions are jointly over $f$ and $\bs u$, the Kullback-Leibler divergence does not change when we use the $f$-marginal distributions, as follows from \cite{matthews2016}, noting that the inducing variables are measurable functions of $f$.

The variational posterior 
$\Psi_{\mu', \Sigma'}(\,\cdot\mid \bs{x},\bs{y})$ can be seen as a particular rank-$m$ approximation 
of the full posterior $\Pi(\,\cdot\mid \bs{x},\bs{y})$. 
In the next section we present results  about the rate at which it contracts around the true regression function $f_0$
as $n \to \infty$. 
Since the precise form of the optimal variational parameters is not important here, we simply denote the 
variational posterior by $\Psi(\,\cdot\mid \bs{x},\bs{y}) = \Psi_{\mu', \Sigma'}(\,\cdot\mid \bs{x},\bs{y})$.

\newpage\section{Posterior contraction rates for Gaussian process priors}\label{sec: GP:contraction}
We give a brief overview of posterior contraction rates for GP priors. In Appendix \ref{a:gpcon} we provide further details and discuss general contraction rate results for (nonparametric) Bayesian methods. Here we focus on the results directly used in our main theorem in the upcoming section.

We study the posterior distribution $\Pi(\,\cdot \mid \bs x,\bs y)$ under the assumption that the data $(\bs x, \bs y)$ are generated according to some fixed, ``true'' regression function $f_0 \in L^2(\calX,G)$. In other words, we suppose \eqref{eq: setting} holds with $f_0$ instead of $f$, or equivalently, the pairs $(x_i,y_i)$ are i.i.d. with density 
\[ p_{f_0}(x,y) = (2\pi\sigma^2)^{-1/2}\exp(-(y-f_0(x))^2/(2\sigma^2)) \]
relative to the product of the probability measure $G$ and the Lebesgue measure. We denote by $\P_0$ the associated joint distribution of the data and by $\E_0$ its according expectation operator. General theory on Bayesian contraction rates gives conditions under which the posterior corresponding to a GP prior in the nonparametric regression model contracts around the true regression function $f_0$ at a certain rate $\eps_n \to 0$ as the sample size $n$ tends to infinity.

The standard approach for establishing contraction rates, as exposed in \cite{ghosal2017}, relies on the existence of appropriate hypothesis tests. This is guaranteed when the chosen metric is the Hellinger distance, so the contraction rate is naturally measured relative to this metric on the space of joint densities of the pair $(x_i, y_i)$. Given $f_1,f_2 \in L^2(\calX,G)$, this Hellinger distance $d_{\mathrm H}$ between the two associated densities $p_{f_1}, p_{f_2}$ is given by
\begin{align}
d_{\mathrm H}(p_{f_1}, p_{f_2})^2 
& =  \frac 12 \iint \Big(\sqrt{p_{f_1}(x,y)} - \sqrt{p_{f_2}(x,y)}\Big)^2 \,dy\,dG(x) \nonumber \\
& = \int_\calX 1- \exp\Big(-\frac{(f_1(x)-f_2(x))^2}{8\sigma^2}\Big) \,dG(x). \label{e:hel}
\end{align}
Considering this as a function of $(f_1,f_2)$, the distance $d_{\mathrm H}$ can be viewed as a metric on the function space $L^2(\calX,G)$. In the sequel we shall abuse our notation and simply write $d_{\mathrm H}(f_1,f_2)$.

The posterior is said to contract around the truth $f_0$ at the rate $\eps_n$ with respect to the Hellinger distance $d_{\mathrm H}$  if for all sequences $M_n \to \infty$, 
\begin{equation}\label{eq: postrate}
\E_0 \Pi\big(f: d_{\mathrm H}(f, f_0) \ge M_n\eps_n \mid \bs{x},\bs{y}) \to 0
\end{equation}
as $n \to \infty$. Loosely speaking, (\ref{eq: postrate}) entails that if $f_0$ generated the data, then, asymptotically, all posterior mass lies in Hellinger balls around $f_0$ with a radius of the order $\eps_n$. 

In view of \cite{vaart2008}, for GPs the posterior contraction rate is determined by the concentration function $\varphi_{f_0} : (0,\infty) \to \bbR$ associated to the GP prior $\Pi$, which is defined as
\begin{equation}\label{eq: conf}
\varphi_{f_0}(\eps) = \inf_{h\in \bbH:\|h-f_0\|_{2, G} \le \eps} \|h\|^2_\bbH -\log \Pi(f: \|f\|_{2, G} \le \eps). 
\end{equation}
Here $\bbH$ is the Reproducing Kernel Hilbert Space (RKHS) associated to the prior, and $\|\cdot\|_\bbH$ is the corresponding RKHS norm (see e.g. \citeb{rkhs} or Appendix I of \citeb{ghosal2017}). Specifically, if $\eps_n \to 0$ is such that $n\eps^2_n \to \infty$ and 
\begin{equation}\label{eq: con}
\varphi_{f_0}(\eps_n) \le n\eps^2_n, 
\end{equation}
then the posterior distribution contracts at the rate $\eps_n$. The following is a slightly more refined version of this statement.

\begin{lemma}\label{l:con}
Suppose that the concentration function inequality \eqref{eq: con} holds for some sequence of positive numbers $\epsilon_n \to 0$ with $n\epsilon_n^2 \to \infty$. Then, for every constant $C_2>0$ there exists an event $A_n$ in the $\sigma$-field generated by $(\bs x, \bs y)$ such that $\P_0(A_n) \to 1$ and
\begin{equation}\label{e:conbd}
\E_0 \Pi(f : d_{\mathrm H}(f,f_0) \geq M_n\epsilon_n \mid \bs x,\bs y)1_{A_n} \lesssim \exp(-C_2 n\epsilon_n^2).
\end{equation}
\end{lemma}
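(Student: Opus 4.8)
The plan is to run the classical testing argument for posterior contraction (Ghosal--Ghosh--van der Vaart; \cite{vaart2008}), tracking the exponential rates and collecting two favourable events — a lower bound on the evidence and non-rejection by a suitable test — into the single event $A_n$. Write $D_n := \int \prod_{i=1}^n (p_f/p_{f_0})(x_i,y_i)\,d\Pi(f)$ and $N_n(B) := \int_B \prod_{i=1}^n (p_f/p_{f_0})(x_i,y_i)\,d\Pi(f)$, so that $\Pi(B\mid\bs x,\bs y) = N_n(B)/D_n$, and note $\E_0 N_n(B) = \Pi(B)$ and $\E_0 (1-\phi)N_n(B) = \int_B \E_f(1-\phi)\,d\Pi(f)$ by Fubini and a change of measure. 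In model \eqref{eq: setting} with known $\sigma^2$ one has $\KL(p_{f_0},p_f) = \|f-f_0\|_{2,G}^2/(2\sigma^2)$, with a matching bound on the second moment of the log-likelihood ratio (handled, as in \cite{vaart2008}, via continuous versions and the uniform boundedness of RKHS balls). Hence the small-ball lemma for Gaussian measures — Borell's inequality together with the Cameron--Martin shift, giving $\Pi(\|f-f_0\|_{2,G}\le 2\eps_n)\ge e^{-\varphi_{f_0}(\eps_n)}$ — combined with \eqref{eq: con} yields $\Pi(B_n)\ge e^{-n\eps_n^2}$ for a Kullback--Leibler-type neighbourhood $B_n$ of $f_0$ of radius a fixed multiple of $\eps_n$. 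The usual evidence lower bound (e.g.\ Lemma 8.10 of \cite{ghosal2017}) then shows that for any $C_1>1$ the event $E_n := \{D_n\ge e^{-C_1 n\eps_n^2}\}$ satisfies $\P_0(E_n^c)\lesssim (n\eps_n^2)^{-1}\to 0$.

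Next, the sieve and the tests. By Borell's inequality and the RKHS-entropy estimate for GP priors (Lemma 5.3 of \cite{vaart2008}), for any constant $C>0$ there is a measurable set $\calF_n\subseteq L^2(\calX,G)$ with $\Pi(\calF_n^c)\le e^{-Cn\eps_n^2}$ and $\log N(\eps_n,\calF_n,d_{\mathrm H})\lesssim n\eps_n^2$; here we use $d_{\mathrm H}(f_1,f_2)\le \|f_1-f_2\|_{2,G}/(\sqrt 8\sigma)$ from \eqref{e:hel} to transfer the $L^2(\calX,G)$-entropy bound to $d_{\mathrm H}$. Since $d_{\mathrm H}$ is a Hellinger distance between densities of i.i.d.\ observations, each $f_1$ with $d_{\mathrm H}(f_1,f_0)>M_n\eps_n/2$ admits a test of $f_0$ against a $d_{\mathrm H}$-ball around $f_1$ of radius a fixed fraction of $d_{\mathrm H}(f_1,f_0)$ with both error probabilities at most $e^{-cn d_{\mathrm H}(f_1,f_0)^2}$. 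Slicing $\{f\in\calF_n: d_{\mathrm H}(f,f_0)\ge M_n\eps_n\}$ into shells $\{jM_n\eps_n\le d_{\mathrm H}<(j+1)M_n\eps_n\}$, $j\ge 1$, covering each shell via the entropy bound and taking the maximum of the resulting tests yields a test $\phi_n$ with $\E_0\phi_n\le e^{-Kn\eps_n^2}$ and $\sup\{\E_f(1-\phi_n): f\in\calF_n,\ d_{\mathrm H}(f,f_0)\ge M_n\eps_n\}\le e^{-KM_n^2 n\eps_n^2}$, where $K\to\infty$ since $M_n\to\infty$ dominates the entropy constant.

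Now set $A_n := E_n\cap\{\phi_n=0\}$, so $\P_0(A_n^c)\le \P_0(E_n^c)+\E_0\phi_n\to 0$. On $A_n$ we have $D_n\ge e^{-C_1 n\eps_n^2}$ and $\phi_n=0$; splitting $\{d_{\mathrm H}(f,f_0)\ge M_n\eps_n\}$ over $\calF_n$ and $\calF_n^c$ and bounding $\Pi(\cdot\mid\bs x,\bs y)\le e^{C_1 n\eps_n^2}N_n(\cdot)$ gives
\[
\E_0\,\Pi\big(f: d_{\mathrm H}(f,f_0)\ge M_n\eps_n \mid \bs x,\bs y\big)1_{A_n}
\le e^{C_1 n\eps_n^2}\Big(\Pi(\calF_n^c)+\int_{\{d_{\mathrm H}\ge M_n\eps_n\}\cap\calF_n}\E_f(1-\phi_n)\,d\Pi(f)\Big)
\le e^{(C_1-C)n\eps_n^2}+e^{(C_1-KM_n^2)n\eps_n^2}.
\]
Choosing $C>C_1+C_2$ and using $M_n\to\infty$, the right-hand side is $\lesssim e^{-C_2 n\eps_n^2}$, which is \eqref{e:conbd}.

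The crux is the bookkeeping rather than any single estimate. The event $\{D_n<e^{-C_1 n\eps_n^2}\}$ is only \emph{polynomially} unlikely, so it must be absorbed into $A_n^c$ — where the posterior probability is bounded crudely by $1$ — rather than fed into the expectation as in the classical theorem, which only yields $\E_0\Pi(\cdots)\to 0$; this is precisely why the refined statement localises the exponential bound to the high-probability event $A_n$. Secondly, one must exploit the freedom in the sieve exponent $C$ and the blow-up $M_n\to\infty$ to beat the prescribed rate $e^{-C_2 n\eps_n^2}$ for \emph{every} fixed $C_2$. The only genuinely technical point is the control of the second moment of the log-likelihood ratio (and of $\E_G (f-f_0)^4$) underlying the evidence bound, which is handled by standard Gaussian process arguments.
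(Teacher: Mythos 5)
Your proof is correct and follows essentially the same route as the paper: the paper verifies the prior-mass, sieve and entropy conditions of its Lemma \ref{l:ggv} (the refined Ghosal--van der Vaart theorem) via the Gaussian sieve construction of Theorem 2.1 in \cite{vaart2008} and the bound $d_{\mathrm H}\lesssim\|\cdot\|_{2,G}$, exactly the ingredients you use. The only difference is that you inline the standard testing-plus-evidence-lower-bound proof of that general lemma (putting the evidence event and non-rejection event into $A_n$) rather than citing it, which is precisely the bookkeeping the paper defers to the cited proof.
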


Note that the preceding lemma implies the posterior contraction \eqref{eq: postrate}. This inequality together with a bound on the Kullback-Leibler divergence in \eqref{eq: titsias} will help establish our main result, a contraction rate statement for the variational posterior; see Theorem \ref{thm: main} ahead and the lemmas below it.  The results leading to Lemma \ref{l:con} are recalled and discussed in Appendix \ref{a:gpcon}. We note that in specific examples, verifying the concentration inequality \eqref{eq: con} means analysing the so-called small ball behaviour of the prior GP and the approximation properties of its RKHS (see also Section \ref{sec:examples} and Appendix \ref{a:sqexp}).

\section{Main results}\label{sec:main}

In this paper we are interested in contraction rate results like \eqref{eq: postrate}, but for the variational posterior $\Psi(\,\cdot \mid \bs{x},\bs{y})$ instead of the full posterior $\Pi(\,\cdot\mid \bs{x},\bs{y})$. It is intuitively clear that in addition to an assumption like \eqref{eq: con}, this requires control over the approximation properties of the variational family, which depend on the choice of inducing variables $\bs{u} = (u_1,\ldots,u_n)$. In the following theorem, this is measured in terms of the expected ``size'' of the difference between the matrices $K_\ff$ and $Q_\ff$ (defined in \eqref{e:kff} and \eqref{e:qff}, respectively), which is the covariance matrix of the conditional law of the vector $\bs{f} = (f(x_1),\ldots, f(x_n))$ given $\bs{u}$ (see \eqref{eq: fgu2}). The size of $K_\ff- Q_\ff$ measures how well the vector of inducing variables $\bs{u}$ summarises the full prior distribution. In short, we characterise the contraction rate of the variational posterior by conditions on the inducing variables and the prior.

Below, $\|A\|$ and $\tr(A)$ are the operator norm and trace of the square matrix $A$, and $\E_{\bs{x}}$ is the expectation over the input variables $\bs{x}$ alone. 

\begin{theorem}\label{thm: main}
Suppose that for $f_0 \in L^2(\calX,G)$ and $\eps_n \to 0$ such that $n\eps^2_n \to \infty$, the concentration function inequality \eqref{eq: con} holds. If in addition 
there exists a constant $C > 0$ (independent of $n$) such that
\begin{align}
\E_{\bs{x}} \|K_\ff- Q_\ff\| & \le C, \label{eq:norm}\\
\E_{\bs{x}} \tr(K_\ff- Q_\ff) & \le Cn\eps^2_n, \label{eq:trace}
\end{align}
then the variational posterior contracts around $f_0$ at the rate $\eps_n$, that is,
for all sequences $M_n \to \infty$, 
\begin{equation}\label{e:vpcon}
\E_0 \Psi\big(f: d_{\mathrm H}(f, f_0) \ge M_n\eps_n \mid \bs{x},\bs{y}\big) \to 0.
\end{equation}
as $n \to \infty$. 
\end{theorem}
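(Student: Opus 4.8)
The plan is to transfer the contraction of the true posterior (Lemma~\ref{l:con}) to the variational posterior $\Psi(\,\cdot\mid\bs x,\bs y)$ by a change-of-measure argument governed by the Kullback--Leibler divergence in \eqref{eq: titsias}. Concretely, the work splits into (a) proving $\E_0\KL\big(\Psi(\,\cdot\mid\bs x,\bs y)\,\|\,\Pi(\,\cdot\mid\bs x,\bs y)\big)\le C' n\eps_n^2$ for a constant $C'=C'(C,\sigma)$, and (b) a soft argument combining this with Lemma~\ref{l:con} to obtain \eqref{e:vpcon}. Step (a) is where I expect the main obstacle to lie.

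For (a) I would start from the closed form \eqref{eq: titsias}. Since $Q_\ff=K_{\bs{fu}}K_{\bs{uu}}^{-1}K_{\bs{uf}}$ is a Schur complement one has $0\preceq Q_\ff\preceq K_\ff$ in the Loewner order, hence $K_n\succeq Q_n\succeq\sigma^2 I$, so $\log(|Q_n|/|K_n|)\le 0$ may be discarded from an upper bound, $\|K_n^{-1}\|,\|Q_n^{-1}\|\le\sigma^{-2}$, and $K_n-Q_n=K_\ff-Q_\ff\succeq0$. The determinant-free part $\tfrac1{2\sigma^2}\tr(K_\ff-Q_\ff)$ is controlled directly by \eqref{eq:trace}. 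For the quadratic form, conditioning on $\bs x$ and using $\E_0[\bs y\bs y^T\mid\bs x]=\bs f_0\bs f_0^T+\sigma^2I$ with $\bs f_0=(f_0(x_1),\dots,f_0(x_n))$ gives
\[
\E_0\big[\bs y^T(Q_n^{-1}-K_n^{-1})\bs y\big]=\E_{\bs x}\big[\bs f_0^T(Q_n^{-1}-K_n^{-1})\bs f_0\big]+\sigma^2\,\E_{\bs x}\tr(Q_n^{-1}-K_n^{-1}).
\]
Writing $Q_n^{-1}-K_n^{-1}=Q_n^{-1}(K_\ff-Q_\ff)K_n^{-1}$, the cyclic property of the trace together with $\tr(CD)\le\|C\|\tr(D)$ for $D\succeq0$ bounds the second summand by $\sigma^{-2}\E_{\bs x}\tr(K_\ff-Q_\ff)$, again handled by \eqref{eq:trace}.

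The delicate term is $\bs f_0^T(Q_n^{-1}-K_n^{-1})\bs f_0$: the naive estimate $\|Q_n^{-1}-K_n^{-1}\|\,\|\bs f_0\|^2$ is only of order $n$, which is far too crude. Here I would invoke the concentration-function hypothesis \eqref{eq: con}, which yields $h_0\in\bbH$ with $\|h_0-f_0\|_{2,G}\le\eps_n$ and $\|h_0\|_\bbH^2\le\varphi_{f_0}(\eps_n)\le n\eps_n^2$. Splitting $\bs f_0=\bs h_0+(\bs f_0-\bs h_0)$, the approximation part contributes at most $\sigma^{-2}\|\bs f_0-\bs h_0\|^2$ (because $Q_n^{-1}-K_n^{-1}\preceq Q_n^{-1}\preceq\sigma^{-2}I$), with $\E_{\bs x}\|\bs f_0-\bs h_0\|^2=n\|f_0-h_0\|_{2,G}^2\le n\eps_n^2$. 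For the main part, bound $\bs h_0^T(Q_n^{-1}-K_n^{-1})\bs h_0\le\bs h_0^T Q_n^{-1}\bs h_0$ and use the Loewner inequality $K_\ff=Q_\ff+(K_\ff-Q_\ff)\preceq Q_\ff+\|K_\ff-Q_\ff\|\,I\preceq\max\!\big(1,\|K_\ff-Q_\ff\|/\sigma^2\big)Q_n$. Representing $\bs h_0=K_\ff\alpha$ via the reproducing kernel, with $\alpha^TK_\ff\alpha\le\|h_0\|_\bbH^2$, this inequality gives
\[
\bs h_0^T Q_n^{-1}\bs h_0=\alpha^TK_\ff Q_n^{-1}K_\ff\alpha\le\max\!\big(1,\|K_\ff-Q_\ff\|/\sigma^2\big)\,\alpha^TK_\ff\alpha\le\max\!\big(1,\|K_\ff-Q_\ff\|/\sigma^2\big)\,\|h_0\|_\bbH^2 ,
\]
so that $\E_{\bs x}$ of it is $\le(1+C/\sigma^2)\,n\eps_n^2$ by \eqref{eq:norm}. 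Collecting the pieces proves the bound $\E_0\KL\le C'n\eps_n^2$, and this is precisely where both assumptions \eqref{eq:norm} and \eqref{eq:trace} enter.

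For (b) I would use the elementary change-of-measure inequality: for probability measures $Q\ll P$ and a measurable set $B$ with $P(B)<1$,
\[
Q(B)\le\frac{\KL(Q\,\|\,P)+\log 2}{\log(1/P(B))},
\]
which follows from the data-processing inequality for $\KL$ applied to the partition $\{B,B^c\}$. Fix $M_n\to\infty$, write $B_n=\{f:d_{\mathrm H}(f,f_0)\ge M_n\eps_n\}$, and let $\delta>0$ be arbitrary. Apply Lemma~\ref{l:con} with $C_2:=2C'/\delta$ to get an event $A_n$ with $\P_0(A_n)\to1$ and $\E_0\Pi(B_n\mid\bs x,\bs y)1_{A_n}\lesssim\exp(-C_2n\eps_n^2)$; a Markov step on this expectation produces $A_n'\subseteq A_n$ with $\P_0(A_n')\to1$ on which $\log(1/\Pi(B_n\mid\bs x,\bs y))\ge\tfrac{C_2}{2}n\eps_n^2$. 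On $A_n'$ the change-of-measure bound with $Q=\Psi(\,\cdot\mid\bs x,\bs y)$ and $P=\Pi(\,\cdot\mid\bs x,\bs y)$ gives $\Psi(B_n\mid\bs x,\bs y)\le(\KL+\log 2)/(\tfrac{C_2}{2}n\eps_n^2)$; splitting on $A_n'$ and its complement and using (a) and $n\eps_n^2\to\infty$,
\[
\E_0\Psi(B_n\mid\bs x,\bs y)\le\frac{C'n\eps_n^2+\log 2}{\tfrac{C_2}{2}n\eps_n^2}+\P_0\big((A_n')^c\big)\longrightarrow\frac{2C'}{C_2}=\delta .
\]
Since $\delta>0$ is arbitrary this yields \eqref{e:vpcon}. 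The one point worth flagging is that one should not expect $\E_0\KL=o(n\eps_n^2)$; instead one exploits that the constant $C_2$ in Lemma~\ref{l:con} is free, so the $O(n\eps_n^2)$ bound from (a) suffices once $C_2$ is taken large in terms of $\delta$.
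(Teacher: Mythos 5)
Your proposal is correct, and its skeleton is the same as the paper's: show $\E_0 \KL(\Psi(\,\cdot\mid\bs x,\bs y)\,\|\,\Pi(\,\cdot\mid\bs x,\bs y)) \lesssim n\eps_n^2$ using \eqref{eq:norm}--\eqref{eq:trace} together with the element $h$ supplied by \eqref{eq: con}, then transfer the contraction of $\Pi$ to $\Psi$ by a KL change-of-measure argument. The differences are in the execution and are worth noting. For the KL bound, the paper (Lemma \ref{lem: kl}) keeps the factorization $Q_n^{-1}-K_n^{-1}=Q_n^{-1}(K_n-Q_n)K_n^{-1}$ and extracts $\sigma^{-2}\|K_\ff-Q_\ff\|\,\bs h^TK_\ff^{-1}\bs h$, while you discard $K_n^{-1}$ and use the Loewner bound $K_\ff\preceq\max(1,\|K_\ff-Q_\ff\|/\sigma^2)\,Q_n$ together with $AB^{-1}A\preceq cA$ when $A\preceq cB$; both routes rest on the same projection fact $\bs h^TK_\ff^{-1}\bs h\le\|h\|^2_\bbH$ (your $\bs h_0=K_\ff\alpha$ representation) and give the same order, so this is a harmless repackaging. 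For the transfer step, the paper applies Lemma \ref{l:con} at radius $M_n\eps_n$ and feeds the resulting bound $e^{-C_2nM_n^2\eps_n^2}$ into Lemma \ref{l:rs} with $\delta_n=C_2nM_n^2\eps_n^2$, so the divergence of $M_n$ absorbs the $O(n\eps_n^2)$ KL term; you instead prove the change-of-measure inequality $Q(B)\le(\KL(Q\|P)+\log 2)/\log(1/P(B))$ from scratch (essentially the proof behind Lemma \ref{l:rs}), keep the radius at $M_n\eps_n$ but the exponent at $n\eps_n^2$, and exploit the ``for every $C_2$'' quantifier in Lemma \ref{l:con} via an $\eps$--$\delta$ argument with $C_2=2C'/\delta$ and a Markov/localization step to pass from the expectation bound to a high-probability lower bound on $\log(1/\Pi(B_n\mid\bs x,\bs y))$. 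This is legitimate as Lemma \ref{l:con} is stated, self-contained (no appeal to the Ray--Szab\'o lemma), and correctly flags that $\E_0\KL$ need not be $o(n\eps_n^2)$; the paper's version is slightly slicker in that it needs the contraction lemma only for a single constant $C_2$, gaining the extra factor $M_n^2$ from the radius instead.
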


\begin{proof}
The concentration inequality also holds with $M_n\eps_n$ instead of $\eps_n$. Hence, by Lemma \ref{l:con}, there exist events $A_n$ and a constant $C_2 > 0$ such that $\P_0(A_n) \to 1$ and 
\[
\E_0 \Pi\big(f: d_{\mathrm H}(f, f_0) \ge M_n\eps_n \mid \bs{x},\bs{y} \big) 1_{A_n} \lesssim e^{-C_2nM_n^2\eps^2_n}. 
\]
Lemma \ref{l:rs} applied with $\delta_n = C_2 n M_n^2\epsilon_n^2$ yields
\[ \E_0 \Psi(f : d_{\mathrm H}(f,f_0) \geq M_n\eps_n \mid \bs x,\bs y) 1_{A_n} \lesssim \frac{\E_0 \KL (\Psi(\,\cdot \mid \bs{x}, \bs{y}) \,\|\, \Pi(\,\cdot\mid\bs{x},\bs{y})) + e^{-C_2nM_n^2\eps_n^2}}{nM_n^2\eps_n^2}. \]
The proof is completed by combining this with $\P_0(A_n^c) \to 0$, and, as we prove now,
\begin{equation}\label{e:klbd}
\E_0  \KL (\Psi(\,\cdot \mid \bs{x}, \bs{y}) \,\|\, \Pi(\,\cdot\mid\bs{x},\bs{y})) 
\le C_1n\eps^2_n
\end{equation}
for some positive constant $C_1$. By the concentration function inequality, there exist an $h \in \bbH$ such that $\|h\|^2_\bbH \le n\eps^2_n$ and $\|f_0-h\|_{2, G} \le \eps_n$. Applying Lemma \ref{lem: kl} ahead with that 
choice for $h$ and using the assumptions on $K_\ff- Q_\ff$ then establishes \eqref{e:klbd}.
\end{proof}

It can be seen from the proof that the variational posterior contracts at the same rate as the true posterior if the inequality \eqref{e:klbd} holds. Since $n\epsilon_n^2 \to \infty$, this means that the Kullback-Leibler divergence need not go to zero in $\P_0$-expectation. The inequality, which is an essential step in the above proof, follows from the next lemma. A crucial difference 
with Lemma 2 of \cite{burt2019} is that we consider $f_0$ to be fixed. 

\begin{lemma}\label{lem: kl}
For every $f_0 \in L^2(\mathcal X,G)$ and $h \in \bbH$ we have 
\begin{multline*}
\E_0 \KL \Big(\Psi(\,\cdot \mid \bs{x},\bs{y}) \,\Big\|\, \Pi(\,\cdot \mid \bs{x},\bs{y})\Big) \\
\le  
\frac1{\sigma^2} \Big( n\|f_0-h\|^2_{2, G} + \|h\|^2_\bbH \E_{\bs{x}}\|K_\ff- Q_\ff\|  
 + \E_{\bs{x}} \tr(K_\ff- Q_\ff)\Big).
\end{multline*}
\end{lemma}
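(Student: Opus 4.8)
\textbf{Proof proposal for Lemma \ref{lem: kl}.}

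The plan is to start from the closed-form expression for the Kullback–Leibler divergence in \eqref{eq: titsias} and bound each of its three terms after taking $\E_0$-expectation. Write $\KL(\Psi(\,\cdot\mid\bs x,\bs y)\,\|\,\Pi(\,\cdot\mid\bs x,\bs y)) = \frac12(T_1 + T_2 + T_3)$ with $T_1 = \bs y^T(Q_n^{-1}-K_n^{-1})\bs y$, $T_2 = \log(|Q_n|/|K_n|)$, and $T_3 = \sigma^{-2}\tr(K_n - Q_n)$, where $K_n = \sigma^2 I + K_\ff$ and $Q_n = \sigma^2 I + Q_\ff$. Note $K_n - Q_n = K_\ff - Q_\ff$, so $T_3$ is already exactly $\sigma^{-2}\tr(K_\ff - Q_\ff)$, which under $\E_{\bs x}$ gives precisely the last term in the bound. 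Also $Q_\ff \preceq K_\ff$ since $K_\ff - Q_\ff = \cov_\Pi(\bs f \mid \bs u) \succeq 0$; hence $Q_n \preceq K_n$, so $Q_n^{-1}-K_n^{-1}\succeq 0$ and $T_2 = \log(|Q_n|/|K_n|) \le 0$. Thus $T_2$ can simply be discarded, and the work reduces to bounding $\E_0 T_1$.

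For $T_1$, I would use $Q_n^{-1} - K_n^{-1} = Q_n^{-1}(K_n - Q_n)K_n^{-1} = Q_n^{-1}(K_\ff - Q_\ff)K_n^{-1}$, so that
\[
T_1 = \bs y^T Q_n^{-1}(K_\ff - Q_\ff)K_n^{-1}\bs y.
\]
Since $K_n = \sigma^2 I + K_\ff \succeq \sigma^2 I$ we have $\|K_n^{-1}\| \le \sigma^{-2}$, and similarly $\|Q_n^{-1}\|\le\sigma^{-2}$; but a cruder bound suffices: writing $A = K_\ff - Q_\ff \succeq 0$, by Cauchy–Schwarz in the $A$-semi-inner product,
\[
T_1 = \ip{A^{1/2}Q_n^{-1}\bs y,\, A^{1/2}K_n^{-1}\bs y} \le \|A^{1/2}Q_n^{-1}\bs y\|\,\|A^{1/2}K_n^{-1}\bs y\|.
\]
It is cleanest, though, to instead note $T_1 \le \|A\|\,\|Q_n^{-1}\bs y\|\,\|K_n^{-1}\bs y\| \le \sigma^{-4}\|A\|\,\|\bs y\|^2$ — but this would produce $\|\bs y\|^2$, whose $\E_0$-expectation is of order $n$, giving a term $n\|K_\ff - Q_\ff\|$ rather than $\|h\|_\bbH^2\,\|K_\ff - Q_\ff\|$. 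The improvement to $\|h\|_\bbH^2$ is the crux: one must exploit that the ``signal part'' of $\bs y$, when hit by $K_n^{-1}$ (the exact-posterior precision), is controlled by the RKHS norm. Concretely, decompose $\bs y = \bs f_0 + \bs\varepsilon$ with $\bs f_0 = (f_0(x_1),\dots,f_0(x_n))$, and further compare $\bs f_0$ with $\bs h = (h(x_1),\dots,h(x_n))$. The key linear-algebra fact is that for $\bs h$ arising from an RKHS element, $\bs h^T K_n^{-1}\bs h \le \bs h^T K_\ff^{-1}\bs h \le \|h\|_\bbH^2$ (the map from RKHS to $\bbR^n$ via evaluation is a contraction into the $K_\ff$-inner-product space, a standard RKHS identity), and similarly with $K_n$ replaced by $Q_n$. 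Splitting $T_1$ via $\bs y = (\bs f_0 - \bs h) + \bs h + \bs\varepsilon$ and applying Cauchy–Schwarz termwise, the $\bs h$-against-$\bs h$ contribution yields $\|A\|\,\|h\|_\bbH^2 / \sigma^2$ (up to constants), the $\bs\varepsilon$-against-$\bs\varepsilon$ contribution is handled by $\E_0\|\bs\varepsilon\|^2 = n\sigma^2$ but only appears with an extra $\|K_n^{-1}\|$ or $\|Q_n^{-1}\|$ factor that makes it $\lesssim \tr(A)$ or absorbs into it, and the $(\bs f_0 - \bs h)$-contribution, using $\E_{\bs x}\|\bs f_0 - \bs h\|^2 = n\|f_0 - h\|_{2,G}^2$, produces the term $n\|f_0 - h\|_{2,G}^2/\sigma^2$. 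The cross terms vanish in expectation (for the ones involving $\bs\varepsilon$, which is mean-zero and independent of $\bs x$) or are dominated by the diagonal terms via Cauchy–Schwarz.

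The main obstacle I anticipate is the bookkeeping in the bilinear expansion of $T_1$: one has a product of two factors $Q_n^{-1}\bs y$ and $K_n^{-1}\bs y$ sandwiching $A = K_\ff - Q_\ff$, each of which splits into three pieces, so a priori nine terms, and one must argue that only the three ``diagonal'' ones survive and that they map onto exactly the three terms $n\|f_0-h\|_{2,G}^2$, $\|h\|_\bbH^2\E_{\bs x}\|A\|$, and $\E_{\bs x}\tr A$. The delicate point is getting $\|h\|_\bbH^2$ (not $n\|h\|_{2,G}^2$ or worse) in front of $\E_{\bs x}\|A\|$; this requires pairing the $A^{1/2}$ with the $K_n^{-1}\bs h$ and $Q_n^{-1}\bs h$ factors and invoking $\|A^{1/2}K_n^{-1}\bs h\|^2 \le \|A\|\,\bs h^T K_n^{-2}\bs h \le \sigma^{-2}\|A\|\,\bs h^T K_n^{-1}\bs h \le \sigma^{-2}\|A\|\,\|h\|_\bbH^2$, and symmetrically for $Q_n$, so that the noise-free signal term scales with the RKHS norm rather than the sample size. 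Once the routing of terms is fixed, taking $\E_0 = \E_{\bs x}\E_{\bs\varepsilon\mid\bs x}$ and collecting constants into a single $1/\sigma^2$ prefactor completes the argument.
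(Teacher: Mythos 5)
Your overall route is the same as the paper's: start from \eqref{eq: titsias}, discard the log-determinant term since $Q_n\preceq K_n$, take the expectation over the noise, split the signal part via $\bs h$, use the RKHS projection bound $\bs h^TK_\ff^{-1}\bs h\le\|h\|_\bbH^2$, and let the noise contribution be absorbed into the trace term. However, the step you yourself identify as the crux contains a genuine flaw: the claim that $\bs h^TK_n^{-1}\bs h\le\|h\|_\bbH^2$ holds ``similarly with $K_n$ replaced by $Q_n$'' is false in general. The reproducing-kernel identity behind $\bs h^TK_\ff^{-1}\bs h\le\|h\|_\bbH^2$ uses that $K_\ff$ is the Gram matrix of $k(x_1,\cdot),\ldots,k(x_n,\cdot)$; it has no analogue for the rank-$m$ Nystr\"om matrix $Q_\ff$. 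Concretely, if $\bs h$ is (nearly) orthogonal to the $m$-dimensional range of $Q_\ff$, then $\bs h^TQ_n^{-1}\bs h\approx\sigma^{-2}\|\bs h\|^2\approx\sigma^{-2}n\|h\|_{2,G}^2$, which grows linearly in $n$ while $\|h\|_\bbH^2$ stays fixed. Your symmetric Cauchy--Schwarz split $\bs h^TQ_n^{-1}(K_\ff-Q_\ff)K_n^{-1}\bs h\le\|A^{1/2}Q_n^{-1}\bs h\|\,\|A^{1/2}K_n^{-1}\bs h\|$ (with $A=K_\ff-Q_\ff$) must control the first factor either through $\bs h^TQ_n^{-1}\bs h$ (the false inequality), or through $\|\bs h\|$ (which brings in a factor $\sqrt n$), or through $Q_n^{-1}\preceq(1+\sigma^{-2}\|A\|)K_n^{-1}$ (which brings in a power of $\|A\|$ beyond what assumption \eqref{eq:norm} controls in expectation). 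So as routed, the term $\|h\|_\bbH^2\,\E_{\bs x}\|K_\ff-Q_\ff\|$ does not materialize.

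The repair is the paper's one-sided bound, which never evaluates an RKHS-type quadratic form at $Q_n$: write $Q_n^{-1}-K_n^{-1}=Q_n^{-1}(K_n-Q_n)K_n^{-1}$ and use $\bs h^T(Q_n^{-1}-K_n^{-1})\bs h\le\|Q_n^{-1}\|\,\|K_n-Q_n\|\,\bs h^TK_n^{-1}\bs h\le\sigma^{-2}\|K_\ff-Q_\ff\|\,\|h\|_\bbH^2$, which is legitimate because $Q_n^{-1}-K_n^{-1}\preceq\sigma^{-2}\|K_n-Q_n\|\,K_n^{-1}$ (conjugate by $K_n^{1/2}$ and note the spectrum of $K_n^{1/2}Q_n^{-1}K_n^{1/2}$ equals that of $I+Q_n^{-1/2}(K_n-Q_n)Q_n^{-1/2}$). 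The approximation part is handled by $(\bs f_0-\bs h)^T(Q_n^{-1}-K_n^{-1})(\bs f_0-\bs h)\le\sigma^{-2}\|\bs f_0-\bs h\|^2$ and $\E_{\bs x}\|\bs f_0-\bs h\|^2=n\|f_0-h\|_{2,G}^2$. Two smaller bookkeeping points, since the lemma's constants are exact rather than up to $\lesssim$: the cross term between $\bs h$ and $\bs f_0-\bs h$ is absorbed via $(a+b)^TM(a+b)\le2a^TMa+2b^TMb$ with the factor $2$ cancelled by the prefactor $\tfrac12$ in \eqref{eq: titsias}; and the trace term in the bound is the sum of two halves, $\tfrac12 T_3$ plus the noise contribution $\tfrac{\sigma^2}{2}\tr(Q_n^{-1}-K_n^{-1})\le\tfrac1{2\sigma^2}\tr(K_\ff-Q_\ff)$, rather than coming ``precisely'' from $T_3$ alone.
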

\begin{proof}
The matrix $K_n-Q_n = K_\ff- Q_\ff$ is the covariance matrix of the conditional law of the vector
$\bs{f} = (f(x_1),\ldots, f(x_n))$ given $\bs{u} = (u_1,\ldots,u_m)$. In particular it is positive semidefinite, which implies that $K_n \ge Q_n$, hence $\log (|Q_n|/|K_n|)\leq 0$. Therefore, the KL-divergence between the variational class and the true posterior can be bounded from above by leaving out the logarithmic term on the right hand side of the identity \eqref{eq: titsias}, i.e.
\begin{multline}\label{eq: klbound}
\KL \Big(\Psi_{\mu', \Sigma'}(\,\cdot\mid \bs{x},\bs{y}) \,\Big\|\, \Pi(\,\cdot\mid \bs{x},\bs{y})\Big)
 \leq \frac12 \Big(\bs{y}^T(Q_n^{-1}- K_n^{-1})\bs{y} +\frac1{\sigma^2}\tr(K_n-Q_n)\Big). 
\end{multline}

Now let $\E_{\bs{y}}$ be the expectation over $\bs{y}$, assuming the input variables $\bs{x}$ 
are fixed and $f_0$ is the true regression function, so that $\E_0 = \E_{\bs{x}}\E_{\bs{y}}$. 
We have 
\begin{equation}\label{eq: piet}
\E_{\bs y} {\bs y}^T(Q_n^{-1} - K_n^{-1}){\bs y} = 
\bs{f}_0^T(Q_n^{-1}-K_n^{-1})\bs{f}_0 + \sigma^2 \tr(Q_n^{-1}-K_n^{-1}).
\end{equation}
For the first term on the right-hand side we write, with $\bs{h} = (h(x_1),\ldots, h(x_n))$,  
\begin{align*}
\frac12 \bs{f}_0^T (Q_n^{-1}-K_n^{-1}) \bs{f}_0
&\leq \bs{h}^T Q_n^{-1}(K_n-Q_n)K_n^{-1} \bs{h} + (\bs{f}_0-\bs{h})^T(Q_n^{-1}-K_n^{-1})(\bs{f}_0-\bs{h}) \\
&\leq \|Q_n^{-1}\|\|K_n-Q_n\| \bs{h}^T K_n^{-1} \bs{h} + (\bs{f}_0-\bs{h})^TQ_n^{-1}(\bs{f}_0-\bs{h}) \\
&\leq \frac{1}{\sigma^2}\Big( \|K_n-Q_n\| \bs{h}^T K_\ff^{-1} \bs{h} + \sum_{i=1}^n(f_0(x_i)-h(x_i))^2 \Big),
\end{align*}
where we used that $K_n = \sigma^2 I + K_\ff \geq K_\ff$ and $Q_n = \sigma^2I + Q_\ff \geq \sigma^2 I$. The quantity $\bs{h}^T K_\ff^{-1}\bs{h}$ is the squared RKHS norm of the orthogonal projection in $\bbH$ of the function $h$ on the linear span of the functions $k(x_1,\cdot\,), \ldots, k(x_n, \cdot\,)$. Since orthogonal projections decrease norms, we have $\bs{h}^TK_\ff^{-1}\bs{h} \le \|h\|^2_\bbH$. 

For the second term in \eqref{eq: piet} we note that
\[ \tr(Q_n^{-1}-K_n^{-1}) = \tr(Q_n^{-1}(K_n-Q_n)K_n^{-1}) \leq \|Q_n^{-1}\|\|K_n^{-1}\|\tr(K_n-Q_n), \]
where the matrix norms appearing on the right are both bounded by $\sigma^{-2}$.

Together we get
\begin{equation*}
\frac{1}{2} \E_{\bs{y}} \bs{y}^T(Q_n^{-1}-K_n^{-1})\bs{y}
\leq \frac{1}{\sigma^2}\Big( \|K_n- Q_n\| \|h\|_\bbH^2 + \sum_{i=1}^n(f_0(x_i)-h(x_i))^2 + \frac{1}{2} \tr(K_n-Q_n)\Big).
\end{equation*}
Combining this with \eqref{eq: klbound}, taking expectations over $\bs{x}$ and recalling that $K_n-Q_n = K_\ff - Q_\ff$, we arrive at the statement of the lemma.
\end{proof}

In the next section, we present two choices of inducing variables, also considered in  \cite{burt2019}, to which we apply Theorem \ref{thm: main}.

\section{Inducing variables from eigendecompositions}\label{sec:ind_var}

The covariance operator $T_k$ on $L^2(\calX,G)$ associated with the kernel $k$ is defined as
\begin{equation}
 T_k\psi (y) = \int_\calX k(x,y) \psi(x) \,dG(x). \label{eq: kernel}
\end{equation}
Note that this definition depends on the distribution $G$ of the design points. Since $k$ is a covariance kernel, the operator $T_k$ is positive (meaning $\ip{T_k\psi,\psi} \geq 0$ for all $\psi \in L^2(\calX,G)$). We assume that $k \in L^\infty(G \times G)$. One of the assertions of Mercer's Theorem (see e.g. \citeb{koenig1986}) is that consequently, $T_k$ is a Hilbert-Schmidt operator, and thus compact. It follows that $T_k$ has eigenvalues $\lambda_1 \geq \lambda_2 \geq \cdots \to 0$.

The covariance kernels used in practice satisfy these mild assumptions. We focus on three such kernels in this paper: the Mat\'ern kernel, the squared exponential kernel, and the kernel of a random series prior. For each kernel we consider one or two choices of inducing variables and discuss the conditions of Theorem \ref{thm: main}: we study the concentration function inequality \eqref{eq: con} and analyse the expected norm and trace terms \eqref{eq:norm} and \eqref{eq:trace}. The latter is done with help of the eigenvalues of the operator $T_k$. We consider kernels whose associated operator $T_k$ has exponentially or polynomially decreasing eigenvalues, that is, for $j=1,2,\ldots,$ we assume 
one of the conditions
\begin{align}
\lambda_j & \leq C_{\exp} b_n e^{-D_{\exp} b_nj}, \label{def:exp} \\
C_\alpha^{-1} j^{-1-2\alpha/d} \leq \lambda_j &  \leq C_\alpha j^{-1-2\alpha/d}, \label{def:poly}
\end{align}
for $0<b_n\leq 1$ and positive constants $C_{\exp},D_{\exp},C_\alpha$.

\subsection{Using the eigendecomposition of the covariance matrix}\label{sec:empeigen}

In this case we construct inducing variables using the  $m$ largest eigenvalues and the corresponding eigenvectors 
of the matrix $K_\ff = [k(x_i,x_j)]_{1 \leq i,j \leq n}$. 
We define
\begin{equation}\label{eq:indvar1}
u_j=\bs{v}_j^T \bs{f} = \sum_{i=1}^n v_j^i f(x_i),\qquad j=1,\ldots,m,
\end{equation}
where $\bs{v}_j=(v_j^1,v_j^2,\ldots,v_j^n)$ is the eigenvector corresponding to the $j$th largest eigenvalue $\mu_j$ of the matrix $K_\ff$. Note that each $u_j$ is a linear functional of $f$, and more precisely a linear combination of the values of $f$ evaluated at the observations $\bs{x}$. 
It is easy to verify (see also  Section C.1. of \citeb{burt2019}) that in this case we have 
\begin{gather*}
(K_{\bs{uu}})_{ij} = \cov_\Pi(u_i,u_j) = \mu_j \delta_{ij},\\
(K_{\bs{fu}})_{ij} = \cov_\Pi(f(x_i),u_j) = \mu_j v_j^i.
\end{gather*}
Hence in view of the identity $K_\ff = \sum_{j=1}^n \mu_j \bs{v}_j\bs{v}_j^T$,
\begin{gather}
Q_\ff = K_{\bs{fu}} K_{\bs{uu}}^{-1} K_{\bs{uf}} = \sum_{j=1}^m \mu_j \bs{v}_j\bs{v}_j^T,\nonumber \\
K_\ff - Q_\ff =\sum_{j=m+1}^n \mu_j \bs{v}_j\bs{v}_j^T.\label{ineq:eigen}
\end{gather}
Note that with this choice of $\bs{u}$ the matrix $Q_\ff$ is the optimal rank-$m$ approximation of $K_\ff$. 
The computational complexity of obtaining the first $m$ eigenvalues and the corresponding eigenvectors of $K_\ff$ numerically is $O(mn^2)$, by using for instance the Lanczos iteration (\citeb{lanczos1950}). Analytical expressions for the eigenvalues and eigenvectors of $K_\ff$ are not available for the majority of commonly used kernels.

Since the eigenvectors $\bs{v}_j$ are orthogonal, 
\begin{align}
\|K_\ff-Q_\ff\| &= \mu_{m+1}, \label{eq:met1norm} \\
\tr(K_\ff-Q_\ff) &= \sum_{j=m+1}^n \mu_j. \label{eq:met1trace}
\end{align}
%
%
%
\cite{burt2020} explain that this choice of $Q_\ff$ is the minimiser of both these quantities. As such, the right-hand sides of the above identities serve as benchmarks for other choices of inducing variables. To bound these, we will use repeatedly the part of Proposition 2 in \cite{shawe:2003} stating that
\begin{equation}\label{eq: shawe}
\E_{\bs{x}} \sum_{j=j_0}^n \mu_j /n \leq \sum_{j=j_0}^\infty \lambda_j
\end{equation}
for all $j_0$ between $1$ and $n$. 

We bound the expected trace and norm terms in Theorem \ref{thm: main}. For exponentially decreasing eigenvalues \eqref{def:exp} this is straightforward. Indeed, from \eqref{eq:met1trace} and \eqref{eq: shawe} we obtain
\begin{equation}\label{eq:expbounds}
\E_{\bs{x}} \|K_\ff-Q_\ff\| \leq \E_{\bs{x}} \tr(K_\ff-Q_\ff) \leq n \sum_{j=m+1}^n \lambda_j 
 \lesssim n \sum_{j=m+1}^\infty b_n e^{-D_{\exp} b_n j} \lesssim n e^{-D_{\exp}b_n m},
\end{equation}
which suffices for our purposes. Polynomially decaying eigenvalues require more work as we need to do better than bounding the operator norm by the trace.

\begin{lemma}\label{lem: UB:emp:eigenvalue}
If the eigenvalues $\lambda_1,\lambda_2,\ldots$ of the operator \eqref{eq: kernel} are polynomially decaying \eqref{def:poly}, then there is a constant $\bar{C}_\alpha$ such that
\begin{gather*}
\E_{\bs{x}} \|K_\ff - Q_\ff\| \leq \bar{C}_\alpha n m^{-1-2\alpha/d}, \\
\E_{\bs{x}} \tr(K_\ff-Q_\ff) \leq \bar{C}_\alpha nm^{-2\alpha/d}, 
\end{gather*}
for any $2 \leq m \leq n$.
\end{lemma}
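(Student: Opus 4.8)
The plan is to use the exact identities $\|K_\ff-Q_\ff\|=\mu_{m+1}$ and $\tr(K_\ff-Q_\ff)=\sum_{j=m+1}^n\mu_j$ from \eqref{eq:met1norm}--\eqref{eq:met1trace}, feed them into the Shawe-Taylor bound \eqref{eq: shawe}, and translate the resulting tail sums of the operator eigenvalues $\lambda_j$ into powers of $m$ via an integral comparison. The only non-mechanical ingredient is an averaging step that produces the extra factor $m^{-1}$ in the operator-norm bound.

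For the trace this is direct: by \eqref{eq:met1trace} and \eqref{eq: shawe},
\[
\E_{\bs{x}}\tr(K_\ff-Q_\ff)=\E_{\bs{x}}\sum_{j=m+1}^n\mu_j\le n\sum_{j=m+1}^\infty\lambda_j\le C_\alpha\,n\sum_{j=m+1}^\infty j^{-1-2\alpha/d},
\]
and comparing $\sum_{j\ge m+1}j^{-1-2\alpha/d}$ with $\int_m^\infty x^{-1-2\alpha/d}\,dx=\tfrac{d}{2\alpha}m^{-2\alpha/d}$ gives $\E_{\bs{x}}\tr(K_\ff-Q_\ff)\lesssim n\,m^{-2\alpha/d}$, with a constant depending only on $\alpha,d,C_\alpha$.

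For the operator norm, bounding $\mu_{m+1}$ by the whole tail $\sum_{j\ge m+1}\mu_j$ would only give $n\,m^{-2\alpha/d}$, losing a factor $m$. Instead I would exploit that the eigenvalues $\mu_j$ of the positive semidefinite matrix $K_\ff$ are nonnegative and nonincreasing, so that for any integer $k$ with $1\le k\le m$,
\[
\mu_{m+1}\le\frac1k\sum_{j=m+1-k}^{m}\mu_j\le\frac1k\sum_{j=m+1-k}^{n}\mu_j.
\]
Taking $k=\lceil m/2\rceil$ --- legitimate since $m\ge 2$, which guarantees $m+1-k\ge 1$ and in fact $m+1-k\ge m/2$ --- and applying \eqref{eq: shawe} to the truncated sum yields
\[
\E_{\bs{x}}\|K_\ff-Q_\ff\|=\E_{\bs{x}}\mu_{m+1}\le\frac nk\sum_{j=m+1-k}^\infty\lambda_j\lesssim\frac nm\sum_{j\ge m/2}j^{-1-2\alpha/d}\lesssim\frac nm\,m^{-2\alpha/d}=n\,m^{-1-2\alpha/d},
\]
again by integral comparison of the tail, with all constants depending only on $\alpha,d,C_\alpha$; taking $\bar C_\alpha$ to be the larger of the two constants produced in the two displays finishes the proof.

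I do not expect any real obstacle. The one step that is an idea rather than a calculation is replacing $\mu_{m+1}$ by the average of the preceding $\Theta(m)$ eigenvalues before invoking \eqref{eq: shawe} --- precisely what the remark ``we need to do better than bounding the operator norm by the trace'' preceding the lemma is pointing at. The only thing to be careful about is the index bookkeeping for the window $\{m+1-k,\dots,m\}$ with $k=\lceil m/2\rceil$, i.e.\ that its lower endpoint is $\ge 1$ and is $\asymp m$; both follow from the standing assumption $2\le m\le n$.
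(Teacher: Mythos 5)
Your proof is correct, and both halves rest on the same two pillars as the paper's: the exact identities $\|K_\ff-Q_\ff\|=\mu_{m+1}$, $\tr(K_\ff-Q_\ff)=\sum_{j>m}\mu_j$, the Shawe--Taylor bound \eqref{eq: shawe} applied to a tail starting near $m/2$, and the integral comparison \eqref{eq: polytr}. The trace bound is verbatim the paper's argument. For the norm bound, however, your execution differs in a way worth noting: you bound $\mu_{m+1}$ \emph{directly} by the average of the $\lceil m/2\rceil$ preceding empirical eigenvalues and then invoke \eqref{eq: shawe} with $j_0=m+1-\lceil m/2\rceil\asymp m$, whereas the paper runs a pigeonhole argument by contradiction: assuming $\E_{\bs x}\mu_i/n>\tilde C_\alpha\lambda_i$ for every $i\in\{m/2,\dots,m\}$, it derives a violation of \eqref{eq: shawe}, extracts one index $i$ in that window with $\E_{\bs x}\mu_i\le n\tilde C_\alpha\lambda_i$, and uses monotonicity $\mu_{m+1}\le\mu_i$ together with $\lambda_i\le\lambda_{m/2}\lesssim m^{-1-2\alpha/d}$. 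Your direct averaging buys two small things: it avoids the contradiction scaffolding, and it uses only the \emph{upper} bound $\lambda_j\le C_\alpha j^{-1-2\alpha/d}$ in \eqref{def:poly}, while the paper's proof also needs the lower bound $\lambda_j\ge C_\alpha^{-1}j^{-1-2\alpha/d}$ (to ensure $\sum_{i=m/2}^m\lambda_i\gtrsim m^{-2\alpha/d}$ dominates the tail $\sum_{i>m}\lambda_i$ when multiplied by $\tilde C_\alpha$). Your index bookkeeping ($m+1-\lceil m/2\rceil\ge 1$ and $\asymp m$ for $m\ge2$) is sound, so there is no gap.
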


\begin{proof}
We deal with the norm term using \eqref{eq:met1norm}. We argue by contradiction. Suppose that for all $i\in\{m/2,\ldots,m\}$ we have $\E_{\bs{x}} \mu_i/n > \tilde{C}_\alpha\lambda_i$, where $\tilde{C}_\alpha = 1+ dC_\alpha^2/\alpha$. Since
\begin{equation}\label{eq: polytr}
\sum_{i=m+1}^{\infty} \lambda_i \leq C_\alpha \sum_{i=m+1}^{\infty} i^{-1-2\alpha/d}\leq C_\alpha\int_{m}^{\infty} t^{-1-2\alpha/d} dt = \frac{C_\alpha d}{2\alpha} m^{-2\alpha/d}, 
\end{equation}
\[
\sum_{i=m/2}^{m} \lambda_i \geq C_\alpha^{-1}\sum_{i=m/2}^{m}i^{-1-2\alpha/d}\geq (2C_\alpha)^{-1} m^{-2\alpha/d},
\]
we have
\[
\E_{\bs{x}} \sum_{i=m/2}^n\mu_i/n \geq \E_{\bs{x}} \sum_{i=m/2}^m\mu_i/n > \tilde{C}_\alpha \sum_{i=m/2}^{m} \lambda_i\geq \sum_{i=m/2}^\infty \lambda_i,
\]
but this contradicts \eqref{eq: shawe}. Therefore there exists $i\in\{m/2,\ldots,m\}$ such that $\E_{\bs{x}}\mu_i/n \leq \tilde{C}_\alpha\lambda_i$. Hence
\[
\E_{\bs{x}} \mu_{m+1}\leq  \E_{\bs{x}}\mu_i\leq n\tilde{C}_\alpha\lambda_i\leq n\tilde{C}_\alpha \lambda_{m/2}\leq (\tilde{C}_\alpha C_\alpha 2^{1+2\alpha/d})m^{-1-2\alpha/d}n,
\]
hence, recalling \eqref{eq:met1norm}, we obtain the bound on the expected norm term.

Regarding the trace term, the inequality \eqref{eq: shawe} implies
\[ \E_{\bs{x}} \tr(K_\ff-Q_\ff) = \E_{\bs{x}} \sum_{i=m+1}^n \mu_i \leq n\sum_{i=m+1}^\infty \lambda_i. \]
The inequality regarding the trace in the statement of the lemma then follows immediately using \eqref{eq: polytr}.
\end{proof}

We turn to the second choice of inducing variables before applying these results to the chosen kernels.

\subsection{Using the eigendecomposition of the covariance operator}\label{sec:opeigen}

The previous method requires computing the eigenvalues and the eigenvectors of the matrix $K_\ff$, which for large data sets becomes computationally demanding. Another choice of inducing variables is
\begin{equation}\label{eq:indvar2}
u_j=\int_{\mathcal{X}} f(x)\varphi_j(x) \, dG(x), \qquad j =1, \ldots, m, 
\end{equation}
where $\varphi_1,\varphi_2,\ldots$ are the eigenfunctions of the kernel operator $T_k$, corresponding to the eigenvalues $\lambda_1,\lambda_2,\ldots,$ so $\int k(x,y) \varphi_i(x) \,dG(x) = \lambda_i \varphi_i(y)$. In case $\mathcal X$ is a compact interval and the functions $\varphi_j$ form a Fourier series, this choice of inducing variables yields the variational Fourier features described in \cite{hensman2018}.

The relevant covariance matrices for the inducing variables \eqref{eq:indvar2} are
\begin{gather*}
(K_{\bs{uu}})_{ij} = \cov_\Pi(u_i,u_j) = \lambda_j \delta_{ij},\\
(K_{\bs{fu}})_{ij} = \cov_\Pi(f(x_i),u_j) = \lambda_j \varphi_j(x_i)
\end{gather*}
(see again Appendix C of \citeb{burt2019} for the proof of these statements). Then in view of Mercer's theorem, $K_\ff = \sum_{j=1}^\infty \lambda_j \bs{\varphi}_j\bs{\varphi}_j^T$ where we denote $\bs{\varphi}_j=(\varphi_j(x_1),\ldots,\varphi_j(x_n))$, so
\begin{gather*}
Q_\ff=\sum_{j=1}^m \lambda_j \bs{\varphi}_j\bs{\varphi}_j^T, \\
K_\ff-Q_\ff= \sum_{j=m+1}^\infty \lambda_j\bs{\varphi}_j\bs{\varphi}_j^T.
\end{gather*}
(Note that unlike the $\bs{v}_j$ from the previous section, the vectors $\bs{\varphi}_j$ do not necessarily form an orthonormal basis of $\mathbb{R}^n$.)

With this choice of inducing variables, we obtain for the expected trace term
\begin{equation}\label{eq: method2trace}
\E_x \tr(K_\ff-Q_\ff) = \sum_{j=m+1}^\infty \lambda_j \sum_{i=1}^n \E_{\bs{x}} \varphi_j(x_i)^2 = n\sum_{j=m+1}^\infty \lambda_j.
\end{equation}
This is exactly the upper bound we obtained for the trace term in the previous section.
For the exponentially decaying eigenvalues, we bound the operator norm just as in \eqref{eq:expbounds} by 
\begin{equation}
\E_{\bs{x}} \|K_\ff-Q_\ff\| \leq \E_{\bs{x}} \tr(K_\ff-Q_\ff) \lesssim n e^{-D_{\exp}b_n m}.
\end{equation}
The results regarding the polynomially decreasing eigenvalues are summarized in the next lemma.

%
%
%
%
\begin{lemma}\label{lem: method2:spectral}
Assume that the eigenvalues  $\lambda_1,\lambda_2,\ldots$ of the operator \eqref{eq: kernel} are polynomially decaying \eqref{def:poly}, with $\alpha>d$. Suppose the corresponding eigenfunctions of the operator $T_k$ are uniformly bounded. Then
\begin{gather*}
\E_{\bs{x}} \| K_\ff-Q_\ff\|\lesssim 1+nm^{-1-2\alpha/d}+n^{d/(2\alpha)}m^{-2\alpha/d}\log n, \\
\E_{\bs{x}} \tr(K_\ff-Q_\ff) \leq \frac{C_\alpha d}{2\alpha} nm^{-2\alpha/d}. 
\end{gather*}
\end{lemma}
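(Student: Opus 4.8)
The trace bound needs no new work: the identity \eqref{eq: method2trace} gives $\E_{\bs x}\tr(K_\ff-Q_\ff)=n\sum_{j=m+1}^\infty\lambda_j$, and the upper bound in \eqref{def:poly} together with the integral comparison already carried out in \eqref{eq: polytr} yields $\sum_{j=m+1}^\infty\lambda_j\le \frac{C_\alpha d}{2\alpha}m^{-2\alpha/d}$, which is exactly the claimed inequality. So all the effort goes into the spectral‑norm estimate, and the plan there is to exploit that $K_\ff-Q_\ff=\sum_{j=m+1}^\infty\lambda_j\bs{\varphi}_j\bs{\varphi}_j^T$ is a random positive semidefinite matrix whose largest eigenvalue can be controlled by matrix concentration after a Gram‑matrix reduction, preceded by a truncation of the eigenvalue series.

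First I would truncate: fix a level $N$ (to be chosen) and split $K_\ff-Q_\ff=H_N+R_N$ with $H_N=\sum_{j=m+1}^{N}\lambda_j\bs{\varphi}_j\bs{\varphi}_j^T$ and $R_N=\sum_{j>N}\lambda_j\bs{\varphi}_j\bs{\varphi}_j^T$. For the remainder, bound the spectral norm by the trace and argue as in \eqref{eq: method2trace}: $\E_{\bs x}\|R_N\|\le\E_{\bs x}\tr(R_N)=n\sum_{j>N}\lambda_j\lesssim nN^{-2\alpha/d}$, and choosing $N\asymp n^{d/(2\alpha)}$ makes this $\lesssim 1$, which produces the constant term in the statement. (This is where $\alpha>d$ is used, to make the truncated tail genuinely negligible in the range of $m$ and $n$ arising in the applications and to keep the absorbed constants finite.)

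For the head, write $H_N=\Phi^T\Phi$ where $\Phi$ is the $(N-m)\times n$ matrix whose $j$‑th row is $\sqrt{\lambda_j}\,\bs{\varphi}_j^T$. Then $\|H_N\|=\|\Phi\Phi^T\|$, and the point of the reduction is that $\Phi\Phi^T=\sum_{i=1}^n\bs{\psi}_i\bs{\psi}_i^T$ is a sum of $n$ \emph{independent} rank‑one positive semidefinite $(N-m)\times(N-m)$ matrices, with $\bs{\psi}_i=(\sqrt{\lambda_j}\varphi_j(x_i))_{m<j\le N}$. Orthonormality of the eigenfunctions in $L^2(\calX,G)$ gives $\E\bs{\psi}_i\bs{\psi}_i^T=\mathrm{diag}(\lambda_{m+1},\dots,\lambda_N)$, hence $\|\E\Phi\Phi^T\|=n\lambda_{m+1}\lesssim nm^{-1-2\alpha/d}$; uniform boundedness of the eigenfunctions gives the almost sure bound $\|\bs{\psi}_i\bs{\psi}_i^T\|=\|\bs{\psi}_i\|^2\le(\sup_j\|\varphi_j\|_\infty^2)\sum_{j>m}\lambda_j\lesssim m^{-2\alpha/d}$. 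Feeding these two numbers into a matrix Chernoff (or Bernstein) inequality for sums of independent positive semidefinite matrices bounds $\E_{\bs x}\|\Phi\Phi^T\|$ by $\|\E\Phi\Phi^T\|$ plus a fluctuation term of order (almost sure bound)$\times$(log of the dimension), i.e.\ $\E_{\bs x}\|H_N\|\lesssim nm^{-1-2\alpha/d}+m^{-2\alpha/d}\log(N-m)$. With $N\asymp n^{d/(2\alpha)}$ this is at most $nm^{-1-2\alpha/d}+n^{d/(2\alpha)}m^{-2\alpha/d}\log n$, and adding the $O(1)$ contribution of $R_N$ gives the claimed bound.

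The main obstacle is this last step: selecting the matrix deviation inequality and doing the bookkeeping so that the spectral‑norm fluctuation of $H_N$ emerges with an acceptable dependence on the truncation level $N$ (hence on $n$) and on $m$. The delicate points are (i) $N$ must be tuned so that the trace‑controlled tail $R_N$ and the head fluctuation are \emph{simultaneously} of acceptable size — $N\asymp n^{d/(2\alpha)}$ being the natural balance point, since it is exactly where $n\sum_{j>N}\lambda_j\asymp 1$; (ii) the almost sure bound on $\|\bs{\psi}_i\|^2$ must combine uniform boundedness with summability of $(\lambda_j)$ (again where $\alpha>d$ enters); and (iii) one should check that the variance parameter in the matrix inequality is no larger than $\|\E\Phi\Phi^T\|$ times the almost sure bound, so that it does not generate an additional dominating term.
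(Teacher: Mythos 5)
Your proof is correct, but it takes a genuinely different route to the spectral-norm bound than the paper. You share with the paper the trace identity, the truncation level $N\asymp n^{d/(2\alpha)}$, and the trace-based treatment of the tail $\sum_{j>N}\lambda_j\bs{\varphi}_j\bs{\varphi}_j^T$; but for the head you pass to the Gram matrix $\Phi\Phi^T=\sum_{i=1}^n\bs{\psi}_i\bs{\psi}_i^T$, a sum of $n$ independent rank-one positive semidefinite matrices, and invoke a matrix Chernoff/Bernstein inequality with mean parameter $\|\E\Phi\Phi^T\|=n\lambda_{m+1}\lesssim nm^{-1-2\alpha/d}$ and a.s.\ bound $\|\bs{\psi}_i\|^2\le C_\varphi^2\sum_{j>m}\lambda_j\lesssim m^{-2\alpha/d}$; your check that the variance parameter is dominated via $\E X_i^2\preceq L\,\E X_i$ (or AM--GM) is the right one. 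The paper instead stays elementary: it controls the empirical inner products $\langle\bs{\varphi}_j,\bs{\varphi}_k\rangle$ uniformly for $j,k\le n^{d/(2\alpha)}$ by Hoeffding's inequality (Lemma \ref{lem: emp:basis}), works on the resulting high-probability event $A_n(C)$ (using the crude trace bound and $\P_{\bs x}(A_n(C)^c)\le n^{-1}$ on the complement), and then bounds the quadratic form by hand over vectors in the span of $\{\bs{\varphi}_k\}_{m<k\le n^{d/(2\alpha)}}$. Your route is shorter, avoids the event construction, and in fact yields the slightly sharper fluctuation term $m^{-2\alpha/d}\log(N-m)\lesssim m^{-2\alpha/d}\log n$ in place of the paper's $n^{d/(2\alpha)}m^{-2\alpha/d}\log n$, at the cost of importing an external matrix concentration inequality (Tropp-type), which you should cite and state precisely. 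One small inaccuracy in your commentary: in the paper the hypothesis $\alpha>d$ is what makes $n^{d/(2\alpha)}C\sqrt{n\log n}\le n/2$, i.e.\ it ensures near-orthogonality of the empirical eigenvectors up to index $n^{d/(2\alpha)}$; in your argument it plays no essential role (only $\alpha>0$ and uniform boundedness are used), so your explanation of where it enters is off, though this does not affect the validity of your proof under the stated assumptions. You may also want to note the trivial edge case $m\ge N$, where the head is empty and the trace bound alone gives $\E_{\bs x}\|K_\ff-Q_\ff\|\lesssim nm^{-2\alpha/d}\le nN^{-2\alpha/d}\lesssim1$.
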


\begin{proof}
The expected trace inequality follows upon combining \eqref{eq: method2trace} and \eqref{eq: polytr}. The expectation of the spectral norm is bounded by distributing over the event
\begin{align*}
A_n(C)=\{\bs{x}\in \mathcal{X}^n:\,|\langle\bs{\varphi}_j, \bs{\varphi}_k\rangle-n\delta_{jk}|\leq C\sqrt{n\log n}, \quad m < j,k\leq n^{d/(2\alpha)} \},
\end{align*}
and its complement.

In view of Lemma \ref{lem: emp:basis} below, there exists a large enough $C>0$ such that $\P_{\bs x} (A_n(C)^c)\leq n^{-1}$. Using the crude estimate
\[ \|K_\ff-Q_\ff\| \leq \tr(K_\ff-Q_\ff) = \sum_{j=m+1}^\infty \sum_{i=1}^n \lambda_j \varphi_j(x_i)^2 \leq n C_\varphi \sum_{j=1}^\infty \lambda_j \lesssim n \]
(the constant $C_\varphi$ being the uniform bound for the $\varphi_j$) we then obtain
\[ \E_{\bs x} \bs 1_{A_n(C)^c} \|K_\ff-Q_\ff\| \leq n \P_{\bs x}(A_n(C)^c) \lesssim 1. \]

On the event $A_n(C)$ we use 
\begin{multline*}
\E_{\bs x} \bs1_{A_n(C)} \| K_\ff-Q_\ff \| \leq \E_{\bs x}\bs1_{A_n(C)} \Big\| \sum_{k=m+1}^{n^{d/(2\alpha)} }\lambda_k \bs{\varphi}_k\bs{\varphi}_k^T\Big\| + \E_{\bs x} \Big\| \sum_{k>n^{d/(2\alpha)}}\lambda_k \bs{\varphi}_k\bs{\varphi}_k^T\Big\| \\
\leq \E_{\bs x} \bs1_{A_n(C)} \max_{\|v\|_2=1}v^{T}\Big(\sum_{k=m+1}^{n^{d/(2\alpha)}}\lambda_k \bs{\varphi}_k\bs{\varphi}_k^T\Big) v+\E_{\bs x} \tr\Big(\sum_{k>n^{d/(2\alpha)}}\lambda_k \bs{\varphi}_k\bs{\varphi}_k^T\Big),
\end{multline*}
where the last inequality follows from the positive semi-definiteness of the matrices $\lambda_k \bs{\varphi}_k\bs{\varphi}_k^T$. The second bounding term equals
\[
\tr\Big(\sum_{k>n^{d/(2\alpha)}}\lambda_k \E_{\bs{x}}\bs{\varphi}_k\bs{\varphi}_k^T\Big) = n\sum_{k>n^{d/(2\alpha)}}\lambda_k\lesssim n \sum_{k>n^{d/(2\alpha)}}k^{-1-2\alpha/d} \lesssim 1.
\]
Lastly, we deal with the first term by bounding
\[
 \max_{\|v\|_2=1}v^{T}\Big(\sum_{k=m+1}^{n^{d/(2\alpha)} }\lambda_k \bs{\varphi}_k\bs{\varphi}_k^T\Big) v
=  \max_{\|v\|_2=1} \sum_{k=m+1}^{n^{d/(2\alpha)} }\lambda_k \langle v,\bs{\varphi}_k \rangle^2.
\]
on the event $A_n(C)$. It is sufficient to consider vectors $v$ of the form $v=\sum_{k=m+1}^{n^{d/(2\alpha)}} \rho_k \bs{\varphi}_k$. On the event $A_n(C)$, using that $\alpha>d$,
\begin{align*}
1&= \|v\|_2^2= \sum_{k,j=m+1}^{n^{d/(2\alpha)}} \rho_j\rho_k \langle   \bs{\varphi}_j, \bs{\varphi}_k\rangle\geq  \sum_{k,j=m+1}^{n^{d/(2\alpha)}} \rho_j\rho_k \Big(n\delta_{jk}- C\sqrt{n\log n}\Big)\\
& \geq 
\sum_{k=m+1}^{n^{d/(2\alpha)}}\rho_k^2 \Big( n -n^{d/(2\alpha)}C\sqrt{n\log n} \Big)\geq \frac{n}{2}\sum_{k=m+1}^{n^{d/(2\alpha)}}\rho_k^2,
\end{align*} 
and therefore
\begin{align*}
\max_{\|v\|_2=1} \sum_{k=m+1}^{n^{d/(2\alpha)}}\lambda_k \langle v,\bs{\varphi}_k \rangle^2
&=  \max_{\|v\|_2=1} \sum_{k=m+1}^{n^{d/(2\alpha)}} \lambda_k  \Big( \sum_{j=m+1}^{n^{d/(2\alpha)}} \rho_j  \langle \bs{\varphi}_j,\bs{\varphi}_k \rangle \Big)^2\\
&\leq  \max_{\|v\|_2=1} \sum_{k=m+1}^{n^{d/(2\alpha)}} \lambda_k  \Big( \sum_{j=m+1}^{n^{d/(2\alpha)}} |\rho_j|  (n\delta_{jk} +C\sqrt{n\log n}) \Big)^2\\
&\lesssim   \max_{\|v\|_2=1} \sum_{k=m+1}^{n^{d/(2\alpha)}} \lambda_k  \Big( n^2\rho_k^2 +n^{\frac{d+2\alpha}{2\alpha}} \log n\sum_{j=m+1}^{n^{d/(2\alpha)}} \rho_j^2 \Big)\\
&\lesssim n\lambda_{m+1}  \big(\max_{\|v\|_2=1}  n\sum_{k=m+1}^{n^{d/(2\alpha)}} \rho_k^2\big) +n^{d/(2\alpha)}\log n \sum_{k=m+1}^{n^{d/(2\alpha)}} \lambda_k\\
&\lesssim nm^{-1-2\alpha/d}+n^{d/(2\alpha)}m^{-2\alpha/d}\log n. 
\end{align*}
The proof is concluded by multiplying with $\bs 1_{A_n(C)}$ and taking expectations $\E_{\bs x}$ in the above display.
\end{proof}

The following lemma provides the concentration inequality for the empirical inner product of the eigenfunctions, used in the proof of the preceding lemma.

\begin{lemma}\label{lem: emp:basis}
For orthonormal functions $\varphi_1,\varphi_2,\ldots,\varphi_{M_n}$ w.r.t. the measure $G$ such that $|\varphi_i|\leq C_{\varphi}$ on $\mathcal{X}$ and $\bs x=( x_1,x_2,\ldots,x_n)$ i.i.d. with common distribution $G$, the random vectors $\bs\varphi_\ell = (\varphi_\ell(x_1),\ldots,\varphi_\ell(x_n))$ satisfy
\begin{align*}
\P_{\bs x}\Big( \sup_{1\leq \ell,k\leq M_n} | \langle\bs\varphi_\ell,\bs\varphi_k \rangle -n\delta_{\ell k}|\geq C\sqrt{n\log n}\Big)\leq M_n^2 n^{-(C/C_{\varphi}^2)^2/2}
\end{align*}
for any $C>0$.
\end{lemma}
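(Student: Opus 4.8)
The plan is to reduce the statement to a standard Chernoff/Hoeffding bound for a sum of i.i.d.\ bounded random variables, applied separately to each pair of indices, followed by a union bound. Fix $\ell,k \in \{1,\dots,M_n\}$ and write
\[
\langle \bs\varphi_\ell,\bs\varphi_k\rangle = \sum_{i=1}^n \varphi_\ell(x_i)\varphi_k(x_i),
\]
which is a sum of $n$ i.i.d.\ terms because the $x_i$ are i.i.d.\ $\sim G$. By the $G$-orthonormality of the $\varphi_j$, each term has mean $\int_\calX \varphi_\ell\varphi_k\,dG = \delta_{\ell k}$, so the sum has mean exactly $n\delta_{\ell k}$; this is the centering appearing in the statement. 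Moreover each term is bounded in absolute value by $C_\varphi^2$ since $|\varphi_j|\le C_\varphi$, so the centered summands lie in an interval of length at most $2C_\varphi^2$.

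Next I would apply Hoeffding's inequality (equivalently, bound the moment generating function of each centered summand via Hoeffding's lemma by that of a sub-Gaussian variable with proxy $C_\varphi^4$, and optimise the Chernoff bound): for every $t>0$,
\[
\P_{\bs x}\big(|\langle \bs\varphi_\ell,\bs\varphi_k\rangle - n\delta_{\ell k}|\ge t\big)\le 2\exp\!\Big(-\frac{t^2}{2nC_\varphi^4}\Big).
\]
Taking $t=C\sqrt{n\log n}$ turns the right-hand side into $2n^{-(C/C_\varphi^2)^2/2}$. Finally, the event in the statement is contained in the union over the at most $M_n^2$ index pairs of the events $\{|\langle \bs\varphi_\ell,\bs\varphi_k\rangle - n\delta_{\ell k}|\ge C\sqrt{n\log n}\}$, so a union bound yields the claimed estimate.

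There is no genuine obstacle here — the argument is entirely routine. The only points needing a little care are: identifying the mean of the i.i.d.\ sum correctly from orthonormality, so that the centering is exactly $n\delta_{\ell k}$ (in particular on the diagonal, where $\langle\bs\varphi_\ell,\bs\varphi_\ell\rangle$ has mean $n$); using the symmetric almost-sure bound $|\varphi_\ell\varphi_k|\le C_\varphi^2$, with the off-diagonal terms being the binding case that produces exactly the stated exponent $(C/C_\varphi^2)^2/2$; and keeping the union bound to the $O(M_n^2)$ index pairs. A Bernstein-type refinement is possible but unnecessary, since Hoeffding already delivers the $\sqrt{n\log n}$ deviation scale used downstream in the proof of Lemma \ref{lem: method2:spectral}.
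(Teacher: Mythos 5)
Your argument is exactly the paper's proof: a union bound over the at most $M_n^2$ index pairs combined with Hoeffding's inequality for the i.i.d.\ summands $\varphi_\ell(x_i)\varphi_k(x_i)$, which are bounded by $C_\varphi^2$ and have mean $\delta_{\ell k}$ by orthonormality, yielding the exponent $(C/C_\varphi^2)^2/2$ at the deviation level $C\sqrt{n\log n}$. The only (immaterial) discrepancy is the two-sided factor $2$ from Hoeffding, which the paper's stated bound silently absorbs as well.
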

\begin{proof}
By the subadditivity of the probability and using Hoeffding's inequality for bounded random variables we get that
\begin{align*}
\P_{\bs x}( \sup_{1\leq \ell,k\leq M_n} &| \langle\bs\varphi_\ell,\bs\varphi_k \rangle -n\delta_{\ell k}|\geq C\sqrt{n\log n})\\
&\leq M_n^2 \sup_{\ell,k} \P_{\bs x}(  | n^{-1}\langle\bs\varphi_\ell,\bs\varphi_k \rangle -\delta_{\ell k}|\geq C\sqrt{n^{-1}\log n})\\
&\leq M_n^2 \exp\{ -\frac{2n^2 C^2n^{-1}\log n }{n4C^4_{\varphi}}\}=M_n^2 \exp\{ -  \frac{C^2}{2C_{\varphi}^4}\log n \},
\end{align*}
finishing the proof of the statement.
\end{proof}

\section{Concrete examples}\label{sec:examples}
%
%
%
%

We consider three explicit examples to demonstrate how the approximation theory from the previous section can be used to apply the main theorem in Section \ref{sec:main}. 
The contraction rates we obtain depend on the smoothness properties of the underlying true 
regression function $f_0$. To make this precise, we recall the definition of two smoothness classes.

The Hölder space $C^\alpha(\calX)$ of smoothness $\alpha > 0$ consists of those functions on $\calX$ with Hölder regularity $\alpha$. This means partial derivatives of order up to $\alpha_0 := \lceil\alpha\rceil - 1$ exist and are uniformly bounded, and derivatives of order equal to $\alpha_0$ satisfy a Hölder condition with exponent $\alpha - \alpha_0$.

The Sobolev space $H^\alpha(\calX)$ is the collection of restrictions $f_0|_\calX$ to $\calX$ of functions $f_0 : \bbR^d \to \bbR$ with Fourier transform $\hat f_0(\lambda) = (2\pi)^{-d} \int_{\bbR^d} e^{i\ip{\lambda,x}} f_0(x) \,dx$ satisfying
\[ \int (1+\|\lambda\|^2)^\alpha |\hat f_0(\lambda)|^2 \,d\lambda < \infty. \]
For $\alpha\in\mathbb{N}$ the space $H^\alpha(\calX)$ coincides with the space of functions with square integrable weak $\alpha$-derivatives over $\calX$.

\subsection{Matérn kernel}\label{s:mat}
The Matérn prior is the centered GP whose covariance kernel is
\begin{equation}\label{eq:mat}
k(x,y) = c_1 \|x-y\|^\alpha K_\alpha(c_2 \|x-y\|),
\end{equation}
where $c_1,c_2,\alpha$ are positive constants and $K_\alpha$ is the modified Bessel function of the second kind (see \citeb{Rasmussen2006}). If $\calX = [0,1]^d$ and $f_0 \in C^\alpha(\calX) \cap H^\alpha(\calX)$, then it is known that the true posterior 
contracts around $f_0$ at the rate $n^{-\alpha/(d+2\alpha)}$; see e.g. \cite{vaart2011}. 
This is the optimal minimax rate of contraction for this problem. 
The following corollary asserts that if the number of inducing variables is chosen at least of 
the order $n^{d/(d+2\alpha)}$, then for the first class of inducing variables considered above, 
the variational posterior attains this optimal rate as well.

\begin{corollary}\label{c:matern}
Let $k$ be the Matérn kernel \eqref{eq:mat} on $\calX = [0,1]^d$ and let $G$ be a distribution with bounded Lebesgue density. Suppose that the inducing variables \eqref{eq:indvar1} are used and $\alpha > d/2$. Then the variational posterior contracts around $f_0 \in C^\alpha(\calX) \cap H^\alpha(\calX)$ at the rate $\epsilon_n = n^{-\alpha/(d+2\alpha)}$ for $m=m_n \geq n^{d/(d+2\alpha)}$.
\end{corollary}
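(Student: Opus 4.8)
The plan is to apply Theorem~\ref{thm: main} with $\eps_n = n^{-\alpha/(d+2\alpha)}$, for which I must verify the concentration function inequality \eqref{eq: con} at this rate together with the norm and trace bounds \eqref{eq:norm} and \eqref{eq:trace}; the remaining hypotheses $\eps_n \to 0$ and $n\eps_n^2 = n^{d/(d+2\alpha)} \to \infty$ are immediate.

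For \eqref{eq: con} I would rely on the known concentration-function analysis of the Mat\'ern process: its RKHS is norm-equivalent to the Sobolev space $H^{\alpha+d/2}([0,1]^d)$, and for $f_0 \in C^\alpha(\calX)\cap H^\alpha(\calX)$ one has $\varphi_{f_0}(\eps)\lesssim n\eps^2$ at $\eps = \eps_n = n^{-\alpha/(d+2\alpha)}$; see \cite{vaart2011}. Two adjustments are routine. First, \eqref{eq: conf} is here measured in $\|\cdot\|_{2,G}$: since $G$ has a bounded Lebesgue density, $\|\cdot\|_{2,G}\le\|\cdot\|_\infty$ controls the centered-ball term, while the small-ball probability $\Pi(\|f\|_{2,G}\le\eps)$ is bounded below by the corresponding $L^2(\mathrm{Leb})$ small-ball probability at a slightly smaller radius, so only constants change. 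Second, a multiplicative constant in front of $n\eps_n^2$ is harmless: replacing $\eps_n$ by a fixed multiple of itself absorbs it and leaves the rate unchanged. Hence \eqref{eq: con} holds for $\eps_n = n^{-\alpha/(d+2\alpha)}$.

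For \eqref{eq:norm} and \eqref{eq:trace} I would invoke Lemma~\ref{lem: UB:emp:eigenvalue}, whose hypothesis is the two-sided polynomial decay \eqref{def:poly}. For the Mat\'ern kernel on $[0,1]^d$ integrated against a measure with bounded Lebesgue density, the operator $T_k$ has eigenvalues $\lambda_j\asymp j^{-1-2\alpha/d}$, which is \eqref{def:poly} with smoothness parameter $\alpha$; this follows from identifying the Mat\'ern RKHS with $H^{\alpha+d/2}$ and the Weyl-type eigenvalue asymptotics for the associated integral operator. Lemma~\ref{lem: UB:emp:eigenvalue} then yields $\E_{\bs x}\|K_\ff - Q_\ff\|\le \bar C_\alpha\, n m^{-1-2\alpha/d}$ and $\E_{\bs x}\tr(K_\ff-Q_\ff)\le\bar C_\alpha\, n m^{-2\alpha/d}$ for $2\le m\le n$. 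Plugging in $m=m_n\ge n^{d/(d+2\alpha)}$ and using $(d/(d+2\alpha))(1+2\alpha/d)=1$ gives $n m^{-1-2\alpha/d}\le n\cdot n^{-1}=1$ and $n m^{-2\alpha/d}\le n\cdot n^{-2\alpha/(d+2\alpha)}=n^{d/(d+2\alpha)}=n\eps_n^2$, i.e.\ exactly \eqref{eq:norm} and \eqref{eq:trace}. Theorem~\ref{thm: main} then delivers \eqref{e:vpcon} with $\eps_n=n^{-\alpha/(d+2\alpha)}$, which is the claim.

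The main obstacle is the eigenvalue estimate \eqref{def:poly}: getting the precise exponent $1+2\alpha/d$ hinges on the RKHS identification and on eigenvalue asymptotics for integral operators of Sobolev-regular kernels, and the lower bound is the more delicate half. I would note, however, that the norm half of Lemma~\ref{lem: UB:emp:eigenvalue} can be re-derived from the upper eigenvalue bound alone --- from $\mu_{m+1}\le\frac{2}{m}\sum_{i>m/2}\mu_i$ combined with \eqref{eq: shawe} --- so in fact only $\lambda_j\lesssim j^{-1-2\alpha/d}$ is required, and this holds as soon as $G$ has a bounded density. Finally, the hypothesis $\alpha>d/2$ enters through the cited Mat\'ern concentration-function estimate, ensuring in particular that the relevant function spaces embed into $C(\calX)$, so that pointwise evaluation and the class $C^\alpha\cap H^\alpha$ are the natural objects.
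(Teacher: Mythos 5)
Your proof follows the paper's argument essentially verbatim: verify the concentration function inequality via (results leading to) Theorem 5 of \cite{vaart2011}, establish the polynomial eigenvalue decay \eqref{def:poly} for $T_k$ (the paper cites \cite{seeger2007} where you invoke Weyl-type asymptotics, but the content is the same), and then apply Lemma~\ref{lem: UB:emp:eigenvalue} and Theorem~\ref{thm: main}. Your closing remark that the norm bound $\E_{\bs x}\mu_{m+1}\lesssim n m^{-1-2\alpha/d}$ follows from $\mu_{m+1}\le\frac{2}{m}\sum_{i>m/2}\mu_i$ and \eqref{eq: shawe} using only the upper half of \eqref{def:poly} is a correct and clean simplification of the paper's contradiction argument in Lemma~\ref{lem: UB:emp:eigenvalue}.
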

\begin{proof}
It follows from the assumptions on $f_0$ and $\alpha$ combined with (results leading to) Theorem 5 in \cite{vaart2011} that $\varphi(\epsilon) \lesssim \epsilon^{-d/\alpha}$, so the concentration function inequality \eqref{eq: con} holds for $\epsilon_n$ as specified. 

The assumptions on $G$ allow for an application of Theorem 1 in \cite{seeger2007}, whose proof yields \eqref{def:poly} for the eigenvalues of the kernel operator $T_k$. Lemma \ref{lem: UB:emp:eigenvalue} implies that the trace and norm inequalities in Theorem \ref{thm: main} hold for $m$ as given. This yields the contraction statement for the variational posterior.
\end{proof}

The other choice of inducing variables \eqref{eq:indvar2} is not considered here, since for the 
stationary Mat{\'e}rn process we don't have access 
to the eigenfunctions of the kernel operator $T_k$.  
For $G$ equal to the uniform distribution on $[0,1]^d$, finding the eigenfunctions and eigenvalues is 
equivalent to finding the Karhunen-Lo\`eve expansion. 
Explicit expressions appear only to be available for the case $\alpha = 1/2$ of the Ornstein-Uhlenbeck process, 
see for instance  \cite{corlay2015}.

%

\subsection{Squared exponential kernel}
The squared exponential process on $\calX = \bbR^d$ with length scale $b > 0$ 
is the centered GP on $\bbR^d$ with covariance function 
\begin{equation}\label{eq:sqexp}
k(x,y) = \exp(-\|x-y\|^2/b^2).
\end{equation} 
The structure of the RKHS and sharp bounds for the concentration function 
are known for this process, but in existing results the process is usually viewed
on a compact subset of $\bbR^d$ and the concentration function 
relative to the uniform norm is considered, see for instance 
\cite{vaart2009}, \cite{vaart2011}. 
In this paper we  want to consider the example that $G$ is a normal 
distribution, in which case the existing results do not directly apply. 
Therefore we adapt the relevant results,
 viewing the squared exponential 
process  as a random element in the space $L^2(\calX,G)$. 

We formulate the following lemma for slightly more general distributions $G$ with sub-Gaussian tails, that is, 
we assume that there exist constants $C_1, C_2 > 0$ such that 
\begin{equation}\label{eq: sg}
G(x: \|x\|> a) \le C_1e^{-C_2 a^2}
\end{equation}
for all $a>0$ large enough.
It is seen from the proof that the statement of the lemma can easily be adapted to 
cases with different tail behaviours. 

\begin{lemma}\label{lem: conc_function}
Let $k$ be the squared exponential kernel \eqref{eq:sqexp} with length scale $b = b_n = n^{-1/(d+2\alpha)}$. Suppose that $f_0 \in C^\alpha(\bbR^d) \cap L^2(\bbR^d)$, and $G$ satisfies the sub-Gaussian tail bound \eqref{eq: sg} on $\calX=\bbR^d$. Then the concentration function inequality \eqref{eq: con} is satisfied for $\eps_n$ a multiple of $n^{-\alpha/(d+2\alpha)}\log^{\kappa/2} n$, where $\kappa = 1+3d/2$.
\end{lemma}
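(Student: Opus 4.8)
The plan is to bound the two terms in the concentration function $\varphi_{f_0}(\eps) = \inf_{h\in\bbH:\|h-f_0\|_{2,G}\le\eps}\|h\|_\bbH^2 - \log\Pi(f:\|f\|_{2,G}\le\eps)$ separately, reducing everything to the known results for the squared exponential process on a compact cube and then paying a price for the unbounded, sub-Gaussian design. First I would truncate: fix a radius $R_n\asymp\sqrt{\log n}$ (up to a suitable power to be calibrated against the target rate), write $\calX = B_{R_n}\cup B_{R_n}^c$ with $B_{R_n}=\{x:\|x\|\le R_n\}$, and observe that by the sub-Gaussian tail bound \eqref{eq: sg}, the $L^2(G)$-norm restricted to $B_{R_n}^c$ of both $f_0$ (which is in $L^2(\bbR^d)$) and of any fixed approximant built on the cube is negligible --- of smaller order than $\eps_n$ --- provided $R_n$ is a large enough multiple of $\sqrt{\log n}$. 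Hence it suffices to control $\|h-f_0\|_{2,G}$ on the cube $[-R_n,R_n]^d$, where $G$ has bounded density, so that $\|\cdot\|_{2,G}\lesssim\|\cdot\|_{L^\infty}$ on that cube, and we can invoke the existing uniform-norm results.

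Next, for the decentering term, I would use the standard RKHS approximation for the scaled squared exponential process. By \cite{vaart2009,vaart2011}, for a $C^\alpha$ function on a cube and length scale $b$, there is $h$ in the RKHS with $\|h-f_0\|_{L^\infty([-R_n,R_n]^d)}\lesssim b^\alpha$ and $\|h\|_\bbH^2 \lesssim b^{-d}\exp(c R_n^2/b^2)$ or a similar expression --- here the domain size $R_n$ enters the RKHS-norm bound, which is the crucial place where the growing domain costs us. With $b=b_n=n^{-1/(d+2\alpha)}$ and $R_n\asymp\sqrt{\log n}$, the approximation error is $\asymp n^{-\alpha/(d+2\alpha)}$ up to logarithmic factors, matching $\eps_n$, while $\|h\|_\bbH^2$ is of order $b_n^{-d}$ times an exponential in $R_n^2/b_n^2 \asymp (\log n) n^{2/(d+2\alpha)}$ --- I need to check that after taking logarithms (as the concentration function only needs $\varphi_{f_0}(\eps_n)\le n\eps_n^2$, i.e. a polynomial-in-$n$ bound) this is absorbed; in fact $\|h\|_\bbH^2 \lesssim n\eps_n^2$ requires the exponent $R_n^2/b_n^2$ to be at most $\log(n\eps_n^2)\asymp\log n$ up to constants, which forces $R_n^2 \asymp b_n^2\log n$ --- wait, that would make $R_n\to 0$. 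So the correct route is to choose $b_n$ slightly \emph{larger}: this is exactly why the rate carries the extra $\log^{\kappa/2}n$ factor. I would therefore carry the logarithmic slack explicitly, setting the target $\eps_n = C n^{-\alpha/(d+2\alpha)}\log^{\kappa/2}n$, and solve the inequalities $b_n^{-d}e^{cR_n^2/b_n^2}\le n\eps_n^2$ and $b_n^\alpha + (\text{tail terms})\lesssim\eps_n$ and $-\log\Pi(\|f\|_{2,G}\le\eps_n)\le n\eps_n^2$ simultaneously, checking that $\kappa = 1+3d/2$ makes all of them hold with $b_n\asymp n^{-1/(d+2\alpha)}$ up to logarithmic corrections.

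For the small-ball (centered) term, I would again restrict to the cube: since $\|\cdot\|_{2,G}\ge c\|\cdot\|_{2,\mathrm{Leb}}$ on $[-R_n,R_n]^d$ (bounded-below density is not available, but bounded-above density gives $\|\cdot\|_{2,G}\lesssim\|\cdot\|_{L^\infty}$, and for the small-ball probability one wants the reverse, so I would instead lower-bound $\Pi(\|f\|_{2,G}\le\eps_n)$ by $\Pi(\|f\|_{L^\infty(B_{R_n})}\le c\eps_n)$ after handling the tail of $f$ outside $B_{R_n}$ via Borell--TIS or a union bound over a covering, using that the process has bounded variance). Then the uniform-norm small-ball estimates for the scaled squared exponential process from \cite{vaart2009} give $-\log\Pi(\|f\|_{L^\infty(B_{R_n})}\le\delta)\lesssim b_n^{-d}(\log(R_n/(b_n\delta)))^{1+d}$ or similar, and plugging in $\delta\asymp\eps_n$, $b_n$, $R_n$ as above yields a bound of order $n\eps_n^2$, again with the logarithmic power absorbed into $\kappa$. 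The main obstacle is bookkeeping the logarithmic factors: one must track how $R_n\asymp\sqrt{\log n}$ propagates through the exponential factor $e^{R_n^2/b_n^2}$ in the RKHS-norm bound, and verify that the single choice $b_n = n^{-1/(d+2\alpha)}$ together with $\eps_n\asymp n^{-\alpha/(d+2\alpha)}\log^{(1+3d/2)/2}n$ balances the decentering term, the small-ball term, and the two tail corrections all at once --- getting the precise power $\kappa=1+3d/2$ right (rather than merely \emph{some} power) is where the real work lies, and it is essentially a careful optimization over $R_n$ given the exponential dependence.
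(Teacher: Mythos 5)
There is a genuine gap, and it sits exactly where you yourself hesitate. Your plan hinges on the premise that the RKHS approximant of a $C^\alpha$ function on a ball of radius $R_n$ has squared RKHS norm of order $b^{-d}\exp(cR_n^2/b^2)$, i.e.\ that the domain size enters exponentially. That premise is not what the cited results give, and if it were true the lemma could not be proved at all with the prescribed bandwidth: with $R_n\asymp\sqrt{\log n}$ and $b_n=n^{-1/(d+2\alpha)}$ the factor $e^{cR_n^2/b_n^2}=e^{c\,n^{2/(d+2\alpha)}\log n}$ is super-polynomial in $n$, far beyond the allowed budget $n\eps_n^2\asymp n^{d/(d+2\alpha)}\log^{\kappa}n$, and your proposed escape (take $b_n$ ``slightly larger'' and blame the $\log^{\kappa/2}n$ factor on this) both contradicts the hypothesis of the lemma, which fixes $b_n=n^{-1/(d+2\alpha)}$ exactly, and would not help anyway, since making the exponential polynomial forces $b_n\asymp 1$ and destroys the $b_n^\alpha\lesssim\eps_n$ approximation requirement. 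The correct decentering bound (Lemma~\ref{lem: decentering}, taken from the proof of Lemma 4.3 of \citealp{vaart2009}) has no exponential factor: the approximant is a convolution defined globally on $\bbR^d$, using precisely the assumption $f_0\in C^\alpha(\bbR^d)\cap L^2(\bbR^d)$, and its squared RKHS norm is $\lesssim b^{-d}=n^{d/(d+2\alpha)}\le n\eps_n^2$, with approximation error $\lesssim b^\alpha=n^{-\alpha/(d+2\alpha)}$ in $\|\cdot\|_{2,G}$ (no truncation of the design is needed here at all). Consequently the logarithmic inflation $\log^{\kappa/2}n$ does not come from the decentering term, as your proposal asserts, but entirely from the small-ball term.

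On that small-ball term your instinct (truncate at radius $\asymp\sqrt{\log(1/\eps)}$ and use sup-norm results on the truncated region, paying for the tail via \eqref{eq: sg}) is close in spirit to what the paper does, but the paper implements it at the level of the \emph{metric entropy of the RKHS unit ball}: functions in $\bbH^b_1$ are uniformly bounded, so $\|h_1-h_2\|_{2,G}$ splits into a sup-norm term on $[-a,a]^d$ with $a\asymp\sqrt{\log(1/\eps)}$ plus a tail term controlled by \eqref{eq: sg}; the sup-norm entropy bound $(a/b)^d(\log(1/\eps))^{1+d}$ then picks up the extra $(\log(1/\eps))^{d/2}$ from $a^d$, which is exactly the origin of $\kappa=1+3d/2$ (Lemma~\ref{lem: entropy}), and the small-ball estimate in $\|\cdot\|_{2,G}$ follows by the standard entropy--small-ball duality (Lemma~\ref{lem: smallball}). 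Your alternative of lower-bounding $\Pi(\|f\|_{2,G}\le\eps_n)$ by a sup-norm small-ball probability over $B_{R_n}$ also needs more than ``Borell--TIS and bounded variance'': the complement $B_{R_n}^c$ is unbounded and $\sup_{B_{R_n}^c}|f|=\infty$ a.s.\ for the stationary process, so one would need a peeling argument over annuli balancing the growth of the supremum against the sub-Gaussian decay of $G$; this is fixable, but as written it is another unfinished step, whereas the decentering issue above is the one that breaks the proposal.
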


\begin{proof}
This follows from combining Lemma \ref{lem: smallball} and \ref{lem: decentering} in Appendix \ref{a:sqexp}.
\end{proof}

By the results of \cite{vaart2009}, under the assumptions of the above lemma,
the true posterior contracts around $f_0$ at the optimal rate $n^{-\alpha/(d+2\alpha)}$, up to a logarithmic factor.
The following corollary asserts that if $d=1$ and $G$ is a normal distribution, the same is true 
for the variational posteriors considered above.

\begin{corollary}\label{cor:sqexp}
Let $k$ be the squared exponential kernel \eqref{eq:sqexp} with $b=b_n=n^{-1/(1+2\alpha)}$, and $G$ a centered Gaussian distribution on $\calX=\bbR$. Then the variational posterior using either choice of inducing variables \eqref{eq:indvar1} or \eqref{eq:indvar2} contracts around $f_0 \in C^\alpha(\bbR) \cap L^2(\bbR)$ at the rate $\epsilon_n = n^{-\alpha/(1+2\alpha)} (\log n)^{5/4}$, provided that  $m = m_n  \ge D_{\exp}^{-1} n^{1/(1+2\alpha)} \log n$.
\end{corollary}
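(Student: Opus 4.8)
The plan is to verify the three hypotheses of Theorem \ref{thm: main} for the squared exponential kernel with the stated bandwidth $b_n = n^{-1/(1+2\alpha)}$ in dimension $d=1$, with $\eps_n = n^{-\alpha/(1+2\alpha)}(\log n)^{5/4}$. The concentration function inequality \eqref{eq: con} is handled directly by Lemma \ref{lem: conc_function}: with $d=1$ the exponent $\kappa = 1 + 3d/2 = 5/2$, so that lemma gives the concentration inequality for a multiple of $n^{-\alpha/(1+2\alpha)}(\log n)^{5/4}$, which is exactly our $\eps_n$ (note $n\eps_n^2 = n^{1/(1+2\alpha)}(\log n)^{5/2} \to \infty$). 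So the real work is checking the norm and trace conditions \eqref{eq:norm} and \eqref{eq:trace} for both choices of inducing variables.

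Next I would recall the eigenvalue decay of $T_k$ for the squared exponential kernel against a Gaussian design measure $G$. It is classical (see e.g.\ the references used for the squared exponential example) that when $G$ is Gaussian the eigenvalues of the kernel operator decay exponentially; with length scale $b_n$ one gets a bound of the form $\lambda_j \le C_{\exp} b_n e^{-D_{\exp} b_n j}$, i.e.\ condition \eqref{def:exp}. Moreover the associated eigenfunctions (Hermite-type functions) are uniformly bounded. I would quote such a result to establish \eqref{def:exp}. Then for the first choice of inducing variables \eqref{eq:indvar1} the bound \eqref{eq:expbounds} already derived in the text gives, for $m = m_n \ge D_{\exp}^{-1} n^{1/(1+2\alpha)}\log n$,
\[
\E_{\bs x}\|K_\ff - Q_\ff\| \le \E_{\bs x}\tr(K_\ff - Q_\ff) \lesssim n e^{-D_{\exp} b_n m} \le n e^{-\log n} = 1 \le n\eps_n^2,
\]
using $b_n m \ge \log n$. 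This verifies both \eqref{eq:norm} and \eqref{eq:trace} at once. For the second choice \eqref{eq:indvar2}, the same computation applies verbatim, since the exponential-eigenvalue bound on the operator norm there is exactly the same as in \eqref{eq:expbounds} (the displayed inequality following \eqref{eq: method2trace}), and again $ne^{-D_{\exp}b_n m} \lesssim 1$. Hence Theorem \ref{thm: main} applies for either choice of inducing variables and yields contraction at rate $\eps_n$.

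The step I expect to be the main obstacle — or at least the one requiring the most care — is pinning down precisely which existing result gives the exponential eigenvalue decay \eqref{def:exp} and the uniform boundedness of the eigenfunctions \emph{for a Gaussian design measure $G$} rather than for the uniform measure on a compact set, and tracking how the constants $C_{\exp}, D_{\exp}$ depend on the length scale $b_n$. The paper has already flagged (in the squared exponential subsection and Appendix \ref{a:sqexp}) that it must adapt the standard squared-exponential results from the compact, uniform-norm setting to the $L^2(\calX,G)$ setting with Gaussian $G$, so I would lean on those appendix computations; the restriction $d=1$ and $G$ Gaussian in the corollary is presumably exactly what is needed to make the Hermite-function eigenanalysis explicit and the eigenfunction bound uniform. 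Once \eqref{def:exp} is in hand with the $b_n$-dependence shown above, the rest is the short arithmetic already displayed, and the choice $m_n \ge D_{\exp}^{-1} n^{1/(1+2\alpha)}\log n$ is exactly calibrated so that $n e^{-D_{\exp} b_n m_n} \lesssim 1 \le n\eps_n^2$.
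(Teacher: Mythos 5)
Your route is the same as the paper's: Lemma \ref{lem: conc_function} gives \eqref{eq: con} with $\kappa=5/2$ in $d=1$, the exponential spectral decay \eqref{def:exp} feeds into \eqref{eq:expbounds} (method 1) and \eqref{eq: method2trace} with the display after it (method 2), and the calibration $m_n\geq D_{\exp}^{-1}n^{1/(1+2\alpha)}\log n$ gives $ne^{-D_{\exp}b_nm}\leq 1\leq n\eps_n^2$, so Theorem \ref{thm: main} applies. That arithmetic is exactly the paper's.

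The one piece you leave open is precisely the piece the paper's proof actually carries out: establishing \eqref{def:exp} with the scale-dependent constants, i.e.\ that the decay exponent is proportional to $b_n$. Your plan to ``lean on those appendix computations'' would not work, since Appendix \ref{a:sqexp} only treats the concentration function (entropy, small ball, decentering) and says nothing about the spectrum of $T_k$. The paper instead quotes the explicit eigenvalues of the squared exponential kernel under a Gaussian design density $p(x)\propto e^{-2ax^2}$ (from \citealp{Rasmussen2006}), $\lambda_j=\sqrt{2a/A_n}\,(A_nb_n^2)^{-(j-1)}$ with $A_n=a+b_n^{-2}+\sqrt{a^2+2ab_n^{-2}}$, and then checks $(A_nb_n^2)^{-1}=1-z_n\leq e^{-z_n}$ with $z_n/b_n\to\sqrt{2a}$, so that any $0<D_{\exp}<\sqrt{2a}$ works for large $n$ and $\lambda_j\lesssim b_ne^{-D_{\exp}b_nj}$. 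So the ``classical result'' you gesture at does exist, but the $b_n$-dependence you need is obtained by this short computation, which your proposal does not supply. A second, minor point: your claim that the eigenfunctions are uniformly bounded is both unnecessary and not something you should rely on here --- in the exponential-decay regime the trace bound \eqref{eq: method2trace} uses only orthonormality of the $\varphi_j$ under $G$ (via $\E_{\bs x}\varphi_j(x_i)^2=1$), and the operator norm is simply bounded by the trace; uniform boundedness of eigenfunctions is only invoked in Lemma \ref{lem: method2:spectral}, which concerns polynomially decaying eigenvalues and plays no role in this corollary.
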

\begin{proof}
In Lemma \ref{lem: conc_function} we have already established that the concentration function inequality is satisfied for the specified truth $f_0$, scale $b_n$, and rate $\epsilon_n$.

We now prove the eigenvalues of the covariance operator are exponentially decaying. For notational convenience suppose that $a>0$ is such that $G$ has density $p(x) \propto e^{-2ax^2}$. There is an explicit expression for the eigenvalues (see \citeb{Rasmussen2006})
\[ \lambda_j = \sqrt{2a/A_n} \Big(\frac{1}{A_nb_n^2}\Big)^{j-1}, \quad j=1,2,\ldots \]
with $A_n = a+b_n^{-2}+\sqrt{a^2+2ab_n^{-2}}$. We note that
\[ \frac{1}{A_nb_n^2} = 1-z_n \leq e^{-z_n} \]
for $z_n = \sqrt{a^2b_n^4+2ab_n^2}-ab_n^2$, and $z_n/b_n \to \sqrt{2a}$ as $n\to\infty$, so $z_n > D_{\exp} b_n$ when $0< D_{\exp} < \sqrt{2a}$ and $n$ is sufficiently large. Then
\[ \lambda_j \leq \sqrt{2a/A_n} e^{-z_n j} \lesssim b_n e^{-D_{\exp}b_n j}, \]
so we are in the situation of \eqref{def:exp}. By \eqref{eq:expbounds} and \eqref{eq: method2trace}, the choice of $m$ yields 
\[ \E_{\bs{x}} \|K_\ff-Q_\ff\| \leq \E_{\bs{x}} \tr(K_\ff-Q_\ff) \lesssim n e^{-D_{\exp}b_n m} \leq 1, \]
so the conditions of Theorem \ref{thm: main} are satisfied. 
\end{proof}

\begin{remark}
A stronger requirement on the smoothness of $f_0$ is that it belongs to the RKHS $\bbH$ associated to the prior. 
In this case the RKHS approximation term in the concentration function is bounded by a constant, so the contraction rate is characterised by the small ball probability which is bounded in Lemma \ref{lem: smallball}. One can take a fixed length scale $b>0$, so that the concentration function inequality holds for $\epsilon_n$ satisfying
\[ \Big(\log\frac{1}{\eps_n}\Big)^\kappa \lesssim n\eps_n^2. \]
This is fulfilled by the rate $\epsilon_n = n^{-1/2} (\log n)^{\kappa/2}$, which is almost the parametric rate $n^{-1/2}$. By the arguments used to establish the above corollary, the variational posterior contracts at this rate when $m_n$ is taken of the order $\log n$. This is also what \cite{burt2019} suggest for the exponential kernel. Our Corollary \ref{cor:sqexp} illustrates that this choice may not be optimal if $f_0$ is not so smooth that it  belongs to the RKHS of the covariance kernel, which only contains analytic functions. 
See also the numerical illustration in Section \ref{sec:num}.
\end{remark}

\subsection{Random series prior}

The last choice of kernel is one defined through a series expansion. 
We take $\calX=[0,1]^d$ and consider a uniform distribution $G$ for the design points. Let $(\varphi_j)$ be an orthonormal basis of the corresponding function space $L^2[0,1]^d$. Suppose that the basis functions are continuous and uniformly bounded, that is, $\sup_j \sup_x |\varphi_j(x)|<\infty$. Define, for  $\alpha > 0$, the series
\begin{equation}\label{eq:kh}
f(x) = \sum_{j=1}^\infty j^{-1/2-\alpha/d} \varphi_j(x) Z_j, \qquad x \in [0,1]^d,
\end{equation}
where $(Z_j)$ is a sequence of i.i.d. standard normal random variables.  The series converges uniformly and the resulting process $(f(x):x \in [0,1]^d)$ is a centered GP with covariance function
\begin{equation}\label{eq:serieskernel}
k(x,y) = \sum_{j=1}^\infty j^{-1-2\alpha/d} \varphi_j(x)\varphi_j(y).
\end{equation}
By construction, $(\varphi_j)$ is the orthonormal eigenbasis of the associated operator $T_k$ with eigenvalues $\lambda_j=j^{-1-2\alpha/d}$. We note that one can generalise these priors to compact Riemannian manifolds $\mathcal X$ (in fact, the compactness assumption can also be relaxed for appropriate choice of $G$) and the coefficients $j^{-1/2-\alpha/d}$ can be replaced by any sequence $\sqrt{\lambda_j}$ such that \eqref{eq:kh} converges.

We consider contraction of the variational posterior corresponding to this prior. A function $f_0 \in L^2[0,1]^d$ has the expansion $f_0 = \sum_{j=1}^\infty f_{0,j} \varphi_j$ where $f_{0,j} =\ip{f_0,\varphi_j}$. Here we consider functions in the Sobolev space
\[ 
\tilde H^\alpha = \{ f \in L^2[0,1]^d : \|f\|_\alpha < \infty\}, \qquad \|f\|_\alpha^2 = \sum_j j^{2\alpha/d} |\ip{f,\varphi_j}|^2. 
\]
In general this space is different from the previously defined $H^\alpha([0,1]^d)$ since it depends on the choice of basis functions $\varphi_j$. 
If $(\varphi_j)$ is the standard Fourier basis in $d=1$, however, the spaces coincide.

With either choice of inducing variables discussed earlier, the variational posterior contracts around elements of $\tilde H^\alpha$ at the minimax rate. 

\begin{corollary}
Consider the kernel \eqref{eq:serieskernel} for some uniformly bounded orthonormal basis $(\varphi_j)$ of $L^2[0,1]$ consisting of continuous functions. Suppose that either
\begin{itemize}
\item[--] the inducing variables \eqref{eq:indvar1} are used and $\alpha > d/2$, or
\item[--] the inducing variables \eqref{eq:indvar2} are used and $\alpha>d$.
\end{itemize}
Then the variational posterior contracts around $f_0 \in \tilde H^\alpha$ at the rate $\epsilon_n = n^{-\alpha/(d+2\alpha)}$ for $m=m_n \geq n^{d/(d+2\alpha)}$.
\end{corollary}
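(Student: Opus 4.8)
The plan is to verify the two hypotheses of Theorem \ref{thm: main}: the concentration function inequality \eqref{eq: con} for the stated rate $\eps_n = n^{-\alpha/(d+2\alpha)}$, and the norm/trace bounds \eqref{eq:norm}--\eqref{eq:trace} for the two choices of inducing variables. Since the eigenvalues of $T_k$ are exactly $\lambda_j = j^{-1-2\alpha/d}$ by construction, we are squarely in the polynomial regime \eqref{def:poly} with $C_\alpha = 1$, so the approximation-theoretic inputs (Lemmas \ref{lem: UB:emp:eigenvalue} and \ref{lem: method2:spectral}) apply directly; the work is to confirm the arithmetic matches the claimed $m_n$ and to establish the concentration function bound.

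First I would handle the concentration function. With $f_0 \in \tilde H^\alpha$, the RKHS $\bbH$ of the prior \eqref{eq:serieskernel} consists of series $\sum h_j \varphi_j$ with $\sum j^{1+2\alpha/d} h_j^2 < \infty$, so truncating $f_0$ at level $N$ gives $h = \sum_{j\le N} f_{0,j}\varphi_j \in \bbH$ with $\|h\|_\bbH^2 = \sum_{j\le N} j^{1+2\alpha/d} f_{0,j}^2 \lesssim N \|f_0\|_\alpha^2$ and $\|f_0 - h\|_{2,G}^2 = \sum_{j>N} f_{0,j}^2 \lesssim N^{-2\alpha/d}\|f_0\|_\alpha^2$; balancing against $\eps$ gives the approximation term $\lesssim \eps^{-d/\alpha}$. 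For the small-ball exponent $-\log\Pi(\|f\|_{2,G}\le\eps)$, the i.i.d.\ Gaussian series representation \eqref{eq:kh} with eigenvalues $\lambda_j \asymp j^{-1-2\alpha/d}$ is a standard computation (e.g.\ via the series of \citeb{vaart2008} or the general bound for Gaussian sequences with polynomially decaying variances) yielding a bound of the same order $\eps^{-d/\alpha}$ up to constants. Hence $\varphi_{f_0}(\eps) \lesssim \eps^{-d/\alpha}$, and solving $\eps_n^{-d/\alpha} \asymp n\eps_n^2$ produces exactly $\eps_n = n^{-\alpha/(d+2\alpha)}$, with $n\eps_n^2 = n^{d/(d+2\alpha)} \to \infty$.

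Next I would check the inducing-variable conditions. For \eqref{eq:indvar1}, Lemma \ref{lem: UB:emp:eigenvalue} (valid since $2 \le m \le n$, using $\alpha > d/2$) gives $\E_{\bs x}\|K_\ff - Q_\ff\| \lesssim n m^{-1-2\alpha/d}$ and $\E_{\bs x}\tr(K_\ff-Q_\ff) \lesssim n m^{-2\alpha/d}$; plugging $m_n \ge n^{d/(d+2\alpha)}$ makes the first quantity $\lesssim n \cdot n^{-(d+2\alpha)/(d+2\alpha)} = 1$, satisfying \eqref{eq:norm}, and the second $\lesssim n\cdot n^{-2\alpha/(d+2\alpha)} = n^{d/(d+2\alpha)} = n\eps_n^2$, satisfying \eqref{eq:trace}. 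For \eqref{eq:indvar2}, Lemma \ref{lem: method2:spectral} (which needs $\alpha > d$ and uniformly bounded eigenfunctions, both assumed) gives the trace bound identically and the norm bound $\lesssim 1 + nm^{-1-2\alpha/d} + n^{d/(2\alpha)}m^{-2\alpha/d}\log n$; with $m_n \ge n^{d/(d+2\alpha)}$ the middle term is $\lesssim 1$ as before and the last term is $\lesssim n^{d/(2\alpha)} n^{-2\alpha/(d+2\alpha)}\log n = n^{d/(2\alpha) - 2\alpha/(d+2\alpha)}\log n$, whose exponent $\tfrac{d(d+2\alpha) - 4\alpha^2}{2\alpha(d+2\alpha)}$ is negative precisely when $4\alpha^2 > d^2 + 2\alpha d$, i.e.\ when $\alpha > d$, so this term is $\lesssim 1$ as well. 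In both cases \eqref{eq:norm}--\eqref{eq:trace} hold, and Theorem \ref{thm: main} delivers the contraction rate $\eps_n$.

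The main obstacle is the small-ball probability estimate feeding into the concentration function: unlike the approximation term, which is elementary given the explicit Sobolev norm, one must show $-\log\Pi(\|f\|_{2,G}\le\eps) \lesssim \eps^{-d/\alpha}$ for the Gaussian random series with variances $j^{-1-2\alpha/d}$. This is the one place where the argument is not a direct citation of the preceding lemmas; it requires either invoking a known small-ball result for such series (the eigenvalues decay polynomially, so this is classical) or reproducing the short Chernoff/Laplace-transform computation. Everything else is bookkeeping: matching exponents and confirming the threshold conditions $\alpha > d/2$ (resp.\ $\alpha > d$) are exactly what make the norm terms bounded.
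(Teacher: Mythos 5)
Your proposal is correct and follows essentially the same route as the paper: the concentration function is bounded by truncating $f_0$ in the eigenbasis (RKHS norm $\lesssim J$, approximation error $\lesssim J^{-2\alpha/d}$) plus a classical small-ball estimate for the Gaussian series (the paper simply cites Corollary 4.3 of Dunker et al.\ 1998 for $-\log\Pr(\sum_j j^{-1-2\alpha/d}Z_j^2\le\eps^2)\lesssim \eps^{-d/\alpha}$), and then Lemmas \ref{lem: UB:emp:eigenvalue} and \ref{lem: method2:spectral} are applied with the same exponent arithmetic you carried out. One tiny quibble: your parenthetical claims that the norm term is bounded ``precisely when'' $\alpha>d$ (resp.\ that $\alpha>d/2$ is what bounds the method-1 norm term) overstate the equivalence — $4\alpha^2>d^2+2\alpha d$ holds already for $\alpha>d(1+\sqrt5)/4$, and the method-1 bound needs no smoothness threshold — but the implications you actually use are valid, so the proof stands.
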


\begin{proof}
We start by bounding the concentration function. By Theorem 4.1 in \cite{rkhs}, the function $h := \sum_{j=1}^J \ip{f_0,\varphi_j}\varphi_j = \sum_{j=1}^J f_{0,j} \varphi_j$ is an element of the RKHS $\bbH$ of the prior with squared norm $\|h\|_\bbH^2 = \sum_{j=1}^J |f_{0,j}|^2/\lambda_j$. If $f_0 \in \tilde H^\alpha$ then we have 
\[ \|h\|_\bbH^2 = \sum_{j=1}^J |f_{0,j}|^2 j^{1+2\alpha/d} \leq J \|f_0\|_\alpha^2. \]
Moreover,
\[ \|f_0-h\|_{2,G}^2 = \sum_{j>J} |f_{0,j}|^2 \leq \|f_0\|_\alpha^2 J^{-2\alpha/d} \]
so by choosing $J$ of the order $\epsilon^{-d/\alpha}$, it follows that
\[ \inf_{h\in \bbH:\|h-f_0\|_{2, G} \le \eps} \|h\|^2_\bbH \lesssim \eps^{-d/\alpha}. \]
By the expansion \eqref{eq:kh}, the centered small ball probability can be written as
\[ \Pi(f : \|f\|_{2,G} \leq \epsilon) = \Pr\Big( \sum_{j=1}^\infty j^{-1-2\alpha/d} Z_j^2 \leq \epsilon^2\Big). \] 
By Corollary 4.3 in \cite{dunker1998},
\[ - \log \Pr\Big( \sum_{j=1}^\infty j^{-1-2\alpha/d} Z_j^2 \leq \epsilon^2\Big) \lesssim \epsilon^{-d/\alpha}. \]
It follows that the concentration function inequality \eqref{eq: con} holds for $\epsilon_n$ as specified (up to a constant), and this is the rate at which the true posterior contracts.

Evidently the eigenvalues satisfy \eqref{def:poly}. The trace and norm inequalities in Theorem \ref{thm: main} are readily verified for our choice of $m$ with the help of either Lemma \ref{lem: UB:emp:eigenvalue} or Lemma \ref{lem: method2:spectral}. This yields the contraction statement for the variational posterior.
\end{proof}

\section{Numerical experiments}\label{sec:num}

We illustrate the results of Section \ref{sec:ind_var} by two numerical experiments, varying both the kernel and the choice of inducing variables.

\subsection{Matérn kernel – method 1}

We simulate $n=3000$ samples $x_i \sim \textrm{uniform}[0,1]$ and $y_i \sim \mathcal N(f_0(x_i),\sigma^2)$ with $\sigma = 0.2$ and
\[ f_0(x) = |x-0.4|^\alpha - |x-0.2|^\alpha \]
for $\alpha = 0.6$, which is plotted in Figure \ref{fig:f0mat}. We use the Matérn-$\alpha$ kernel for the GP prior and study the variational posterior using the inducing variables obtained from the covariance matrix (Section \ref{sec:empeigen}). 

We compare the behaviour of the true and variational Bayes methods for different choices of the number of inducing points. Figures \ref{fig:goodmat} and \ref{fig:badmat} show the mean and pointwise 95\% credible regions (intervals centered vertically around the posterior mean which have posterior mass $0.95$) for both the true and variational posterior. According to Corollary \ref{c:matern}, $m$ should be at least $n^{1/(1+2\alpha)} \approx 40$. Figure \ref{fig:goodmat} illustrates this: here $m=40$, and although the variational posterior is in general a bit smoother, its credible region is hardly larger than that of the true posterior. On the contrary, one can conclude from Figure \ref{fig:badmat} that it is unwise to take a significantly lower number of inducing variables. The variational posterior mean is far too smooth and credible regions are too wide.

Table \ref{table:kl} shows estimates of the expected Kullback-Leibler divergence \eqref{eq: titsias}, computed from 100 repetitions of the above experiment for different $n$. We used $m=n^{1/(1+2\alpha)}$ inducing variables so that by Corollary \ref{c:matern} the variational posterior contracts at the minimax rate. Note that the KL-divergence increases with $n$, meaning that it does not vanish. This is in according with our theory, which says that the $\P_0$-expectation of the KL-divergence need only be of the order $n\epsilon_n^2 \to \infty$ (see the proof of Theorem \ref{thm: main}).

\begin{figure*}[!h]
	\makebox[\textwidth][c]{\includegraphics[scale=.8]{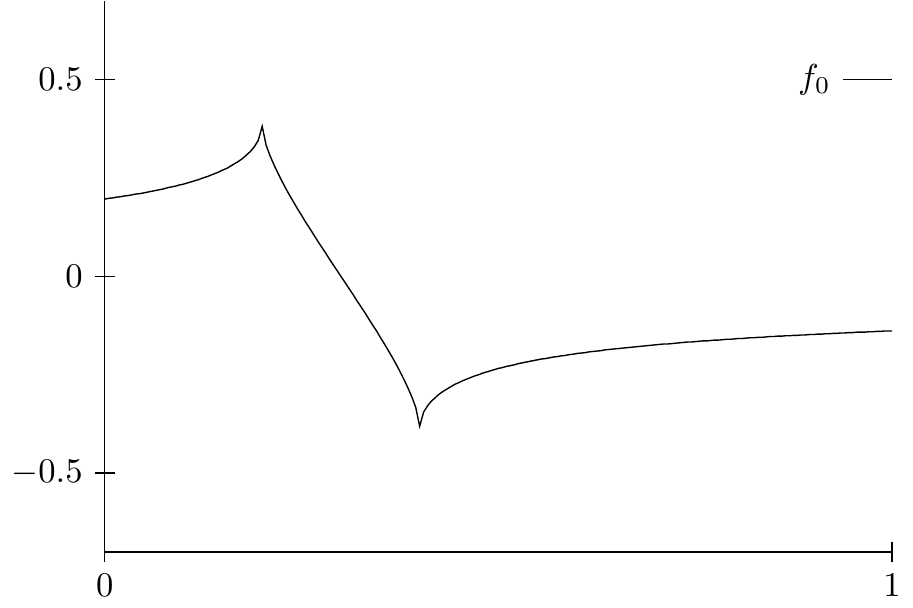}}
	\caption{plot of $f_0 = |x+1|^\alpha - |x+3/2|^\alpha$ for $\alpha = 0.8$}
	\label{fig:f0mat}
\end{figure*}

\begin{figure}[!h]
	\centering
	\begin{minipage}[b]{0.45\textwidth}
		\includegraphics[scale=.8]{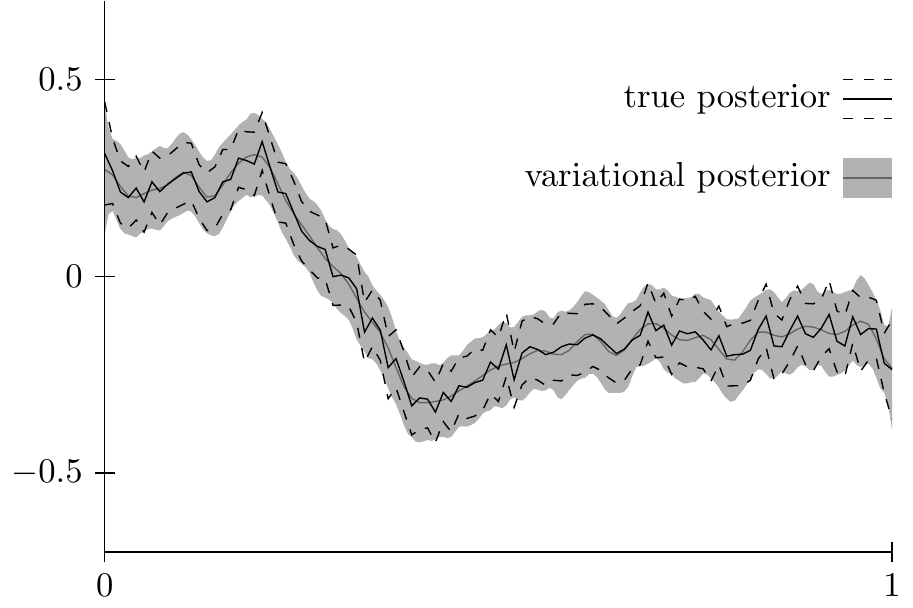}
		\caption{True and variational posterior and credible regions for Matérn prior and $m=40$ inducing variables from method 1}
		\label{fig:goodmat}
	\end{minipage}
	\hfill
	\begin{minipage}[b]{0.45\textwidth}
		\includegraphics[scale=.8]{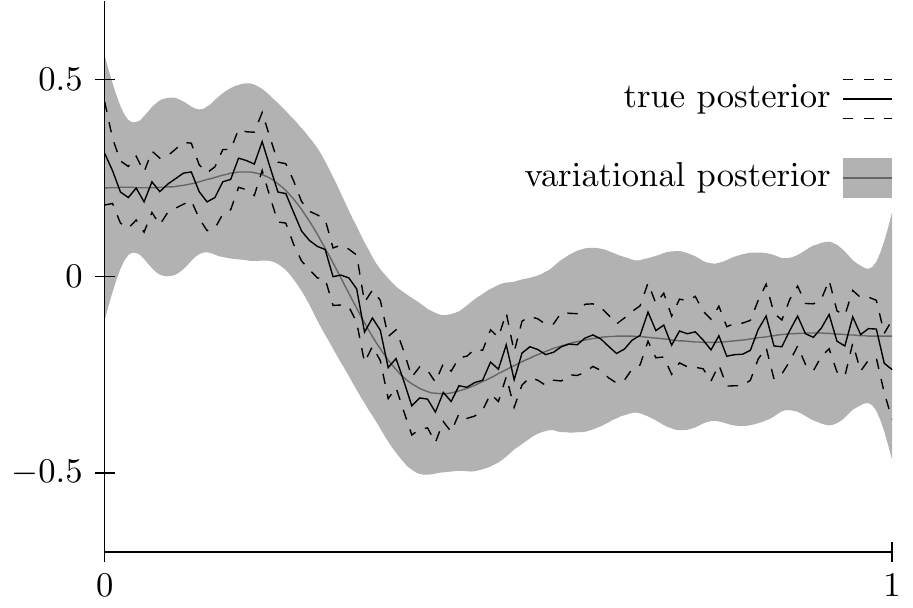}
		\caption{True and variational posterior and credible regions for Matérn prior and $m=10$ inducing variables from method 1}
		\label{fig:badmat}
	\end{minipage}
\end{figure}

\begin{table}[!h]\centering
\begin{tabular}{r|rr}
	$n$    & $\KL(\Psi(\,\cdot\mid\bs x,\bs y)\|\Pi(\,\cdot\mid\bs x,\bs y))$ \\ \hline
	$100$  & $14.71$ $(1.75)$ \\
	$300$  & $25.20$ $(2.31)$ \\
	$1000$ & $42.09$ $(3.23)$ \\
	$3000$ & $68.90$ $(3.94)$ \\
\end{tabular}
\caption{Estimates of the KL-divergence between variational and true posterior (average over 100 repeated experiments). Estimated standard deviations are given between brackets.}
\label{table:kl}
\end{table}

\subsection{Squared exponential kernel – method 2}

In a similar fashion, we simulate $n=5000$ samples $x_i \sim \mathcal N(0,1)$ and $y_i$ from the $\mathcal N(f_0(x_i),\sigma^2)$ distribution with
\[ f_0(x) = |x+1|^\alpha - |x+3/2|^\alpha \]
for $\alpha = 0.8$ and $\sigma = 0.2$. The function $f_0$ is plotted in Figure \ref{fig:f0}. Although strictly speaking $f_0\notin L^2(\calX,G)$, one can easily modify its tails maintaining $f_0 \in C^\alpha(\bbR)$ (also note that with high probability all $x_i$ are in a large compact set).

We use the squared exponential kernel as defined in \eqref{eq:sqexp} with $b=b_n = 4 n^{-1/(1+2\alpha)}$ and the variational Bayes method with operator eigenvectors (Section \ref{sec:opeigen}) as the inducing variables. 

Corollary \ref{cor:sqexp} prescribes that we take $m_n$ at least
\[ (D_{\exp} b_n)^{-1} \log n \approx \sqrt{2a} (n^{1/(1+2\alpha)}/4) \log n \approx 80, \]
where $a = 1/4$ to ensure that $G = \mathcal N(0,1)$. Figure \ref{fig:good} illustrates that this is indeed a good choice of $m$.  One can observe that the true and variational posterior are virtually indistinguishable, i.e., there is almost no loss of information in the variational Bayes method.

In Figure \ref{fig:bad} we take a smaller number $m=40$ of inducing points than the optimal $m\approx 80$. One can observe that in this case the variational posterior mean is overly smooth, although still gives a reasonable estimate of $f_0$. The main difference when considering insufficiently many inducing variables, is that the variational posterior overestimates variance. Here, too, the variational Bayes method provides overly conservative, way too large credible sets compared to the true posterior.

\vspace{1cm}

\begin{figure*}[!h]
	\makebox[\textwidth][c]{\includegraphics[scale=.8]{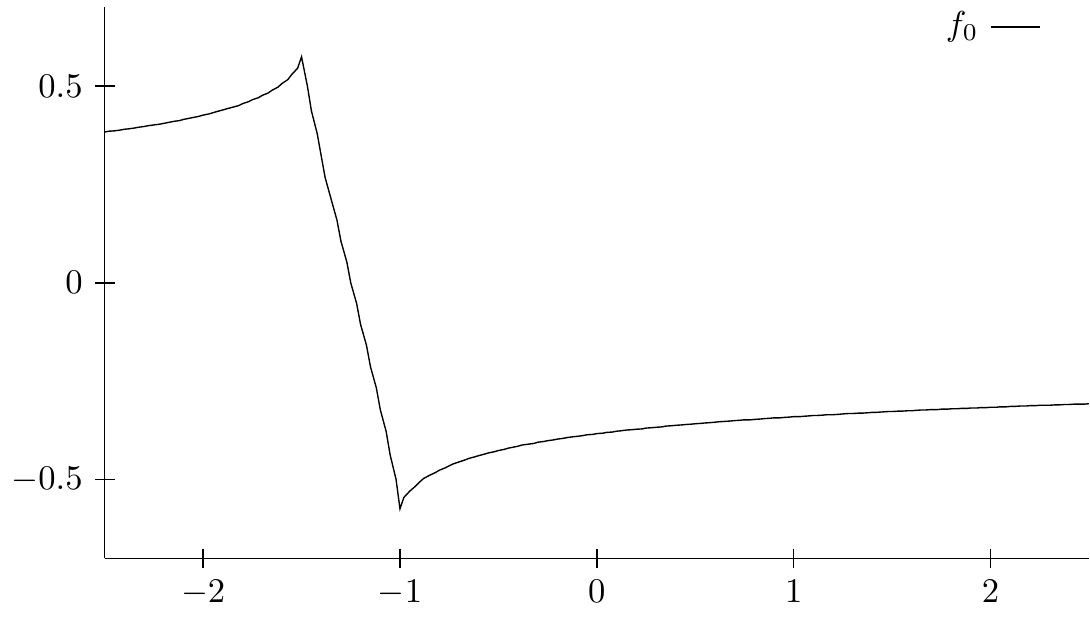}}
	\caption{plot of $f_0 = |x+1|^\alpha - |x+3/2|^\alpha$ for $\alpha = 0.8$}
	\label{fig:f0}
\end{figure*}

\begin{figure*}[!h]
	\makebox[\textwidth][c]{\includegraphics[scale=.8]{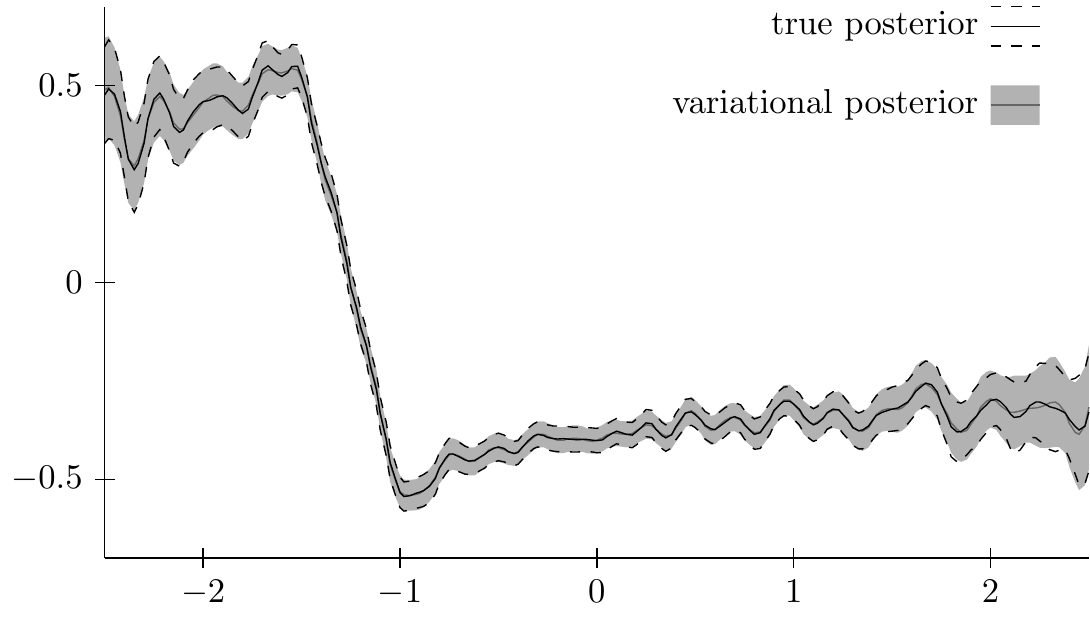}}
	\caption{True and variational posterior and credible regions for squared exponential prior and $m=80$ inducing variables from method 2}
	\label{fig:good}
\end{figure*}

\begin{figure*}[!h]
	\makebox[\textwidth][c]{\includegraphics[scale=.8]{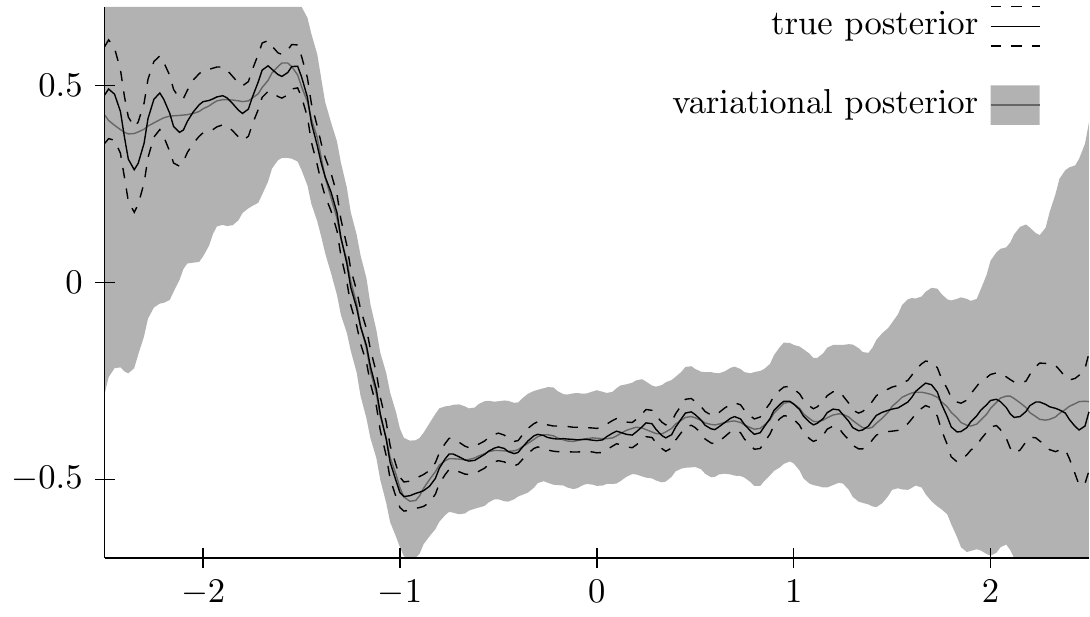}}
	\caption{True and variational posterior and credible regions for squared exponential prior and $m=40$ inducing variables from method 2}
	\label{fig:bad}
\end{figure*}

\newpage \section{Conclusion}

In this paper we consider the inducing variables variational Bayes method for GP regression and determine sufficient conditions, under which the variational approximation achieves the same contract rate around the true functional parameter of interest as the original posterior. As examples we consider three commonly used priors and two choices of inducing variables obtained from spectral decompositions and determine a lower bound on the number of inducing variables, which is sufficient for achieving optimal (minimax) contraction rates for the corresponding variational posterior.

The numerical experiments show that variational credible regions are wider than those associated with the true posterior when too few inducing variables are chosen, providing overly conservative uncertainty statements. Nevertheless this suggests that reliable uncertainty quantification should also carry over from the true to the variational posterior, even if the variational approximations are too sparse. In other words, if the original credible regions can ``capture'' the true regression function with $\P_0$-probability tending to one, then so will variational credible regions. A natural next step is to substantiate these experimental results by theory.

Besides the two choices of inducing variables discussed in this paper, there are various inducing point methods that fall within our framework, simply by taking inducing variables of the form $u_j = f(z_j)$ for points $z_j \in \mathcal X$. \cite{burt2020} discuss several other methods for selecting the inducing points $z_j$ and obtain bounds on the KL-divergence between the true and variational posterior. It would be interesting to see, by means of an application of Theorem \ref{thm: main}, what the minimal number of inducing points has to be in order for these methods to yield optimal contraction rates. 

\acks{We would like to thank the AE and three anonymous reviewers for providing many useful comments that lead to an improved version of the paper.

This project has received funding from the European Research Council (ERC) under the European Union’s Horizon 2020 research and innovation programme (grant agreement No. 101041064). }


\appendix

\section{Theory of contraction rates}\label{a:gpcon}

In this section we provide a brief summary of the frequentist theory of contraction rates for Gaussian Process priors, tailored to our setting. First we start with a general contraction rate result for (nonparametric) posterior distributions. It is a slightly modified version of Theorem 8.9 of \cite{ghosal2017} (which also directly follows from their proof), similar to the original statement that appeared in the seminal paper by \cite{ghosal2000}, but simplified and adapted to our setting. It makes use of the so-called covering number (or entropy)
\begin{equation}\label{e:covnum}
N(\epsilon,\calF,d_{\mathrm H}),
\end{equation}
which is the minimal number of $d_{\mathrm H}$-balls of radius $\epsilon$ required to cover the set $\calF \subset L^2(\calX,G)$.

\begin{lemma}\label{l:ggv}
Suppose that there exists a sieve $\calF\subset L^2(\calX,G)$, a constant $C>0$, and a sequence of postive numbers $\epsilon_n$ with $n\epsilon_n^2 \to \infty$, such that
\begin{gather}
\Pi(f: \|f-f_0\|_{2,G} < \epsilon_n) \geq \exp(-C n\epsilon_n^2), \label{e:sball} \\
\log N(\epsilon_n, \calF, d_{\mathrm H}) \lesssim n \epsilon_n^2, \label{e:entropy} \\
\Pi(\calF^c) \leq \exp(-(C+4)n\epsilon_n^2). \label{e:sprior} 
\end{gather}
Then there exists an event $A_n$ such that $\P_0(A_n) \to 1$, and
\begin{equation*}
\E_0 \Pi(f : d_{\mathrm H}(f,f_0) \geq M_n\epsilon_n \mid \bs x,\bs y)1_{A_n} \lesssim \exp(-C_2 n\epsilon_n^2)
\end{equation*}
holds for some $C_2>0$, and, consequently, the posterior distribution contracts at the rate $\epsilon_n$.
\end{lemma}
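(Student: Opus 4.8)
The plan is to deduce Lemma~\ref{l:ggv} from the general posterior contraction theorem of \citeb{ghosal2000} (see also the proof of Theorem~8.9 in \citeb{ghosal2017}), spelling out how the three displayed hypotheses plug into the standard \emph{tests plus evidence lower bound} scheme in the present Gaussian regression model. Write the posterior as a ratio $\Pi(B\mid\bs x,\bs y)=N_n(B)/D_n$, where $N_n(B)=\int_B\prod_{i=1}^n(p_f/p_{f_0})(x_i,y_i)\,d\Pi(f)$ and $D_n=N_n(L^2(\calX,G))$. The first, model-specific observation is that the explicit Gaussian densities give $\KL(p_{f_0},p_f)=\|f_0-f\|_{2,G}^2/(2\sigma^2)$ and a comparable bound for the second moment of $\log(p_{f_0}/p_f)$ (possibly after intersecting the sieve with a slowly growing sup-norm ball, which for the GP priors at hand costs at most a logarithmic factor), while $d_{\mathrm H}^2(f_1,f_2)\le\|f_1-f_2\|_{2,G}^2/(8\sigma^2)$ by \eqref{e:hel}. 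Hence the $L^2(\calX,G)$-ball in \eqref{e:sball} is, up to a fixed constant on its radius, contained in the usual Kullback--Leibler-type neighbourhood, so \eqref{e:sball} is exactly the prior-mass hypothesis required below.

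Next I would establish the two main ingredients. \textbf{Evidence lower bound:} by the standard argument (using \eqref{e:sball}, $n\epsilon_n^2\to\infty$, and Chebyshev on the log-likelihood ratio) there is an event $\Omega_n$ measurable with respect to $(\bs x,\bs y)$ with $\P_0(\Omega_n)\to1$ on which $D_n\ge\exp(-(C+2)n\epsilon_n^2)$. \textbf{Tests:} because the model is indexed through the Hellinger distance, Le Cam's and Birgé's theory provides, for every $f_1$, a test of $f_0$ against a Hellinger ball centred at $f_1$ with both error probabilities at most $\exp(-c\,n\,d_{\mathrm H}(f_0,f_1)^2)$; covering each shell $\{f\in\calF:jM_n\epsilon_n\le d_{\mathrm H}(f,f_0)<(j+1)M_n\epsilon_n\}$, $j\ge1$, by $N(\epsilon_n,\calF,d_{\mathrm H})$ such balls, taking the maximum of the tests, and summing the resulting geometric series in $j$ yields a test $\phi_n$ with $\E_0\phi_n\le\exp(-KnM_n^2\epsilon_n^2)$ and $\sup\{\E_f(1-\phi_n):f\in\calF,\ d_{\mathrm H}(f,f_0)\ge M_n\epsilon_n\}\le\exp(-KnM_n^2\epsilon_n^2)$ for $n$ large, where \eqref{e:entropy} contributes only a factor $\exp(\mathrm{const}\cdot n\epsilon_n^2)$ that is absorbed because $M_n\to\infty$.

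To conclude, I would combine these with the sieve decomposition. Writing $U_n=\{f:d_{\mathrm H}(f,f_0)\ge M_n\epsilon_n\}$,
\[
\Pi(U_n\mid\bs x,\bs y)\le\phi_n+\frac1{D_n}\int_{U_n\cap\calF}(1-\phi_n)\prod_{i=1}^n\frac{p_f}{p_{f_0}}(x_i,y_i)\,d\Pi(f)+\frac1{D_n}\int_{\calF^c}\prod_{i=1}^n\frac{p_f}{p_{f_0}}(x_i,y_i)\,d\Pi(f).
\]
Multiplying by $1_{\Omega_n}$, taking $\E_0$, using $D_n^{-1}1_{\Omega_n}\le\exp((C+2)n\epsilon_n^2)$, and applying a change of measure with Fubini (so that $\E_0[(1-\phi_n)\prod(p_f/p_{f_0})]=\E_f(1-\phi_n)$ and $\E_0[\prod(p_f/p_{f_0})]=1$), the second term is at most $\exp((C+2-KM_n^2)n\epsilon_n^2)$ and the third, by \eqref{e:sprior}, at most $\exp((C+2)n\epsilon_n^2)\,\Pi(\calF^c)\le\exp(-2n\epsilon_n^2)$. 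Taking $A_n=\Omega_n$ and any $C_2<2$ gives the asserted bound $\E_0\Pi(U_n\mid\bs x,\bs y)1_{A_n}\lesssim\exp(-C_2n\epsilon_n^2)$ for $n$ large; contraction at the rate $\epsilon_n$ then follows from $\E_0\Pi(U_n\mid\bs x,\bs y)\le\E_0\Pi(U_n\mid\bs x,\bs y)1_{A_n}+\P_0(A_n^c)\to0$.

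The step I expect to be the real work is the construction of tests that are exponentially powerful \emph{uniformly over the sieve} $\calF$: this is precisely where the choice of the Hellinger metric is essential, since it is what lets the metric-entropy bound \eqref{e:entropy} be converted into such tests via Le Cam's theory and the shell-peeling device. Everything else — the evidence lower bound, the comparison of the Kullback--Leibler and variance neighbourhoods and of $d_{\mathrm H}$ with $\|\cdot\|_{2,G}$, and the final bookkeeping of constants — is routine, the last of these being immediate from the explicit form of the Gaussian regression densities.
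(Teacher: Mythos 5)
Your overall architecture coincides with what the paper does: Lemma~\ref{l:ggv} is presented there as a slight modification of Theorem 8.9 of \cite{ghosal2017}, and your proposal essentially re-derives that theorem's proof (evidence lower bound for the denominator, Hellinger tests built from the entropy bound \eqref{e:entropy} by shell-peeling, sieve remainder controlled by \eqref{e:sprior} after a change of measure). The testing construction, the treatment of the sieve complement, and the final bookkeeping of constants in your write-up are fine and match the standard argument the paper invokes by citation, together with its two recorded adaptations (prior mass of an $L^2(\calX,G)$-ball instead of a Kullback--Leibler-type neighbourhood, and the availability of tests because the Hellinger metric is used).

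The genuine gap is in the first of these reductions. You assert that the $L^2(\calX,G)$-ball in \eqref{e:sball} is, up to a constant on the radius, contained in ``the usual Kullback--Leibler-type neighbourhood.'' For the KL term this is exact, since $\KL(p_{f_0},p_f)=\|f_0-f\|_{2,G}^2/(2\sigma^2)$, but the neighbourhood in the cited theorem also constrains the second moment of the log-likelihood ratio, and a direct computation gives $\E_0\big(\log(p_{f_0}/p_f)\big)^2=\sigma^{-2}\|f_0-f\|_{2,G}^2+(4\sigma^4)^{-1}\int|f_0-f|^4\,dG$; the fourth moment is not controlled by the $L^2(\calX,G)$-norm (a tall thin spike has small $L^2$- but huge $L^4$-norm), so the claimed containment fails in general. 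Your parenthetical fix --- intersecting the \emph{sieve} with a slowly growing sup-norm ball --- targets the wrong object: the second-moment control is needed for the functions in the prior-mass set that enters the evidence lower bound for the denominator (your Chebyshev step on $\Omega_n$), not for the functions in $\calF$ over which the tests are constructed; moreover, appealing to ``the GP priors at hand'' goes beyond the hypotheses of the lemma, which concerns an arbitrary prior satisfying \eqref{e:sball}--\eqref{e:sprior}. The paper disposes of exactly this point by invoking the equivalence, in the Gaussian regression model, of the $L^2$-ball and the KL-type neighbourhood (Lemma 2.7 of \cite{ghosal2017}); to make your proof complete you must either use such a model-specific bound for the variance term or otherwise strengthen the prior-mass step (e.g.\ establish the evidence lower bound on an event of probability tending to one using only \eqref{e:sball}), rather than the containment-plus-sieve-truncation route you sketch.
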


The above lemma can be summarised as follows: the posterior contraction rate at $f_0$ is $\epsilon_n$ if the prior puts sufficient mass on $\epsilon_n$-balls around $f_0$, and the parameter space can be divided into two sets, of which one has log-entropy of order $n\epsilon_n^2$, and the other attains exponentially small prior mass.

The original statement of this result differs in two ways from ours. Firstly, the original condition \eqref{e:sball} uses KL-divergence and KL-variation instead of the $L^2$-norm $\|\cdot\|_{2,G}$. In our case the statements are equivalent since these are of the same order (see Lemma 2.7 in \citeb{ghosal2017}). Secondly, the original theorem includes a testing condition, which holds in our case due to the use of the Hellinger distance. For details we refer to Appendix D of \cite{ghosal2017}.

For GP priors the contraction rate can be characterised by the concentration function inequality \eqref{eq: con}, since it replaces the conditions of Lemma \ref{l:ggv}.
Indeed, suppose that \eqref{eq: con} holds for some $\epsilon_n\to 0$ with $n\epsilon_n^2\to \infty$. Theorem 2.1 in \cite{vaart2008} applied to the Banach space $L^2(\calX,G)$ with norm $\| \cdot \|_{2,G}$ yields a sieve $\calF$ such that \eqref{e:sball} and \eqref{e:sprior} hold, and moreover,
\begin{equation}\label{e:cover}
\log N(\epsilon_n,\calF,\|\cdot\|_{2,G}) \lesssim n\epsilon_n^2.
\end{equation}
This means there is a bound for a covering number with respect to a different metric. But the elementary inequality $1-e^{-u} \leq u$ applied to \eqref{e:hel} shows that the Hellinger distance $d_{\mathrm H}$ is bounded by the $L^2$-norm $\|\cdot\|_{2,G}$ up to a multiplicative constant, so the covering number in condition \eqref{e:entropy} is bounded by a constant multiplied by the covering number in \eqref{e:cover}. This means all conditions of Lemma \ref{l:ggv} are satisfied and so Lemma \ref{l:con} is proved.

To connect the contraction rates of the true and variational posterior in the proof of Theorem \ref{thm: main}, we use the following result, which is Theorem 5 of \cite{ray2021}.

\begin{lemma}\label{l:rs}
Let $\calF_n$ be a measurable subset of the parameter space $L^2(\calX,G)$, $A_n$ be an event, and $Q$ a distribution for $f$. If there exist $C,\delta_n > 0$ such that 
\[ \E_0 \Pi(\calF_n \mid \bs x,\bs y) 1_{A_n} \leq C e^{-\delta_n}, \]
then
\[ \E_0 Q(\calF_n) 1_{A_n}\leq \frac{2}{\delta_n}\Big(\E_0 \KL(Q\,\|\,\Pi(\,\cdot\mid\bs x,\bs y)) 1_{A_n} + C e^{-\delta_n/2}\Big). \]
\end{lemma}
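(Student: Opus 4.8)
The plan is to reduce the claim to a change-of-measure inequality that holds for each fixed realization of the data, and then integrate. Fix $(\bs x,\bs y)$ and abbreviate $\Pi_n = \Pi(\,\cdot\mid\bs x,\bs y)$. If $Q$ is not absolutely continuous with respect to $\Pi_n$ then $\KL(Q\,\|\,\Pi_n)=\infty$ and the asserted bound is trivial, so we may assume $\KL(Q\,\|\,\Pi_n)<\infty$. The key tool is the Donsker--Varadhan variational representation of the Kullback--Leibler divergence: for every bounded measurable $g$,
\[ \KL(Q\,\|\,\Pi_n) \ge \E_Q[g] - \log \E_{\Pi_n}[e^{g}]. \]
First I would apply this with $g = \lambda\,1_{\calF_n}$ for a parameter $\lambda>0$ to be chosen, which gives
\[ \KL(Q\,\|\,\Pi_n) \ge \lambda\, Q(\calF_n) - \log\big(1+\Pi_n(\calF_n)(e^\lambda-1)\big) \ge \lambda\, Q(\calF_n) - \Pi_n(\calF_n)\,e^\lambda, \]
using $\log(1+x)\le x$ and $e^\lambda-1\le e^\lambda$. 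Rearranging,
\[ Q(\calF_n) \le \frac1\lambda\Big( \KL(Q\,\|\,\Pi_n) + \Pi_n(\calF_n)\,e^\lambda \Big). \]

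Next I would multiply by $1_{A_n}$ and take the expectation $\E_0$ over the data. Since the previous display is valid for every realization of $(\bs x,\bs y)$, this yields
\[ \E_0 Q(\calF_n)1_{A_n} \le \frac1\lambda\Big( \E_0 \KL(Q\,\|\,\Pi(\,\cdot\mid\bs x,\bs y))1_{A_n} + e^\lambda\,\E_0 \Pi(\calF_n\mid\bs x,\bs y)1_{A_n} \Big). \]
Now I would invoke the hypothesis $\E_0\Pi(\calF_n\mid\bs x,\bs y)1_{A_n}\le Ce^{-\delta_n}$ and make the balancing choice $\lambda=\delta_n/2$, so that $e^{\lambda}e^{-\delta_n}=e^{-\delta_n/2}$ and $1/\lambda = 2/\delta_n$; this produces exactly the claimed inequality
\[ \E_0 Q(\calF_n)1_{A_n} \le \frac{2}{\delta_n}\Big( \E_0 \KL(Q\,\|\,\Pi(\,\cdot\mid\bs x,\bs y))1_{A_n} + Ce^{-\delta_n/2}\Big). \]

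The computation itself is elementary; the only points requiring care are the bookkeeping around the change of measure and integrability. One must check that $(\bs x,\bs y)\mapsto \KL(Q\,\|\,\Pi(\,\cdot\mid\bs x,\bs y))$ is measurable so that the right-hand side is well defined (and treat the infinite-KL case separately, as above), and one should verify that the Donsker--Varadhan bound applies in the present setting, e.g.\ by noting that it suffices to use the bounded test function $g=\lambda 1_{\calF_n}$, for which the representation follows from Jensen's inequality applied to $dQ/d\Pi_n$. No optimization over $\lambda$ beyond the explicit choice $\lambda=\delta_n/2$ is needed.
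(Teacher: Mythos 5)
Your proof is correct. The paper does not prove this lemma itself but cites Theorem 5 of Ray and Szab\'o (2021), and your argument---the convex-duality/Donsker--Varadhan bound with the test function $g=(\delta_n/2)\,1_{\calF_n}$, the elementary estimate $\log\bigl(1+\Pi(\calF_n\mid\bs x,\bs y)(e^{\delta_n/2}-1)\bigr)\le e^{\delta_n/2}\,\Pi(\calF_n\mid\bs x,\bs y)$, and then multiplication by $1_{A_n}$ and integration under $\E_0$---is precisely the argument behind that cited result, so you have in effect supplied a correct, self-contained version of it (and your pointwise-in-the-data formulation also covers the case where $Q$ depends on $(\bs x,\bs y)$, as it does in the application to the variational posterior).
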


Although this theorem was applied in context of a high-dimensional parameter space in  \cite{ray2021}, the result holds for general (possibly infinite-dimensional) parameter spaces, hence can be applied in our setting as well.

\section{The concentration function inequality for the squared exponential prior}\label{a:sqexp}

We provide the lemmas used in the proof of Lemma \ref{lem: conc_function}. The first lemma deals with the $L^2(\calX,G)$-entropy of the unit ball $\bbH^b_1$ of the RKHS of the squared exponential process with length scale $b$. Recall that $N$ is defined in \eqref{e:covnum}.

\begin{lemma}\label{lem: entropy}
Let $f$ be the squared exponential process with covariance function \eqref{eq:sqexp}
and suppose that $G$ satisfies the sub-Gaussian tail bound \eqref{eq: sg}. 
There exist a constant $K > 0$ such that for all small enough $\eps> 0$, the logarithm of the covering number satisfies 
\[
\log N(\eps, \bbH^b_1, \|\cdot\|_{2,G}) \le K b^{-d}\Big(\log\frac1\eps\Big)^{\kappa},
\]
where $\kappa = 1+3d/2$. 
\end{lemma}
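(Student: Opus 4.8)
The plan is to reduce the statement to a known entropy bound for the RKHS of the squared exponential process on a bounded set, and then handle the sub-Gaussian tails of $G$ by a truncation argument. First I would recall that for the squared exponential process with length scale $b$ restricted to a ball $\{x : \|x\| \le R\}$, the metric entropy of the unit RKHS ball with respect to the uniform norm is controlled by a bound of the form $\log N(\eps, \bbH^b_1\big|_{\|x\|\le R}, \|\cdot\|_\infty) \lesssim R^d b^{-d} (\log(1/\eps))^{1+d}$ (up to $R$-dependent factors), as established in \cite{vaart2009}, \cite{vaart2011}. Since the uniform norm dominates $\|\cdot\|_{2,G}$, this immediately gives a bound on $\log N(\eps, \bbH^b_1, \|\cdot\|_{2,G})$ restricted to the ball.

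The main work is then passing from the truncated estimate to the full $L^2(\calX,G)$ estimate using the tail bound \eqref{eq: sg}. Here I would choose a radius $R = R_\eps$ growing like a power of $\log(1/\eps)$, say $R_\eps \asymp (\log(1/\eps))^{1/2}$, so that $G(\|x\| > R_\eps) \le C_1 e^{-C_2 R_\eps^2} \lesssim \eps^{c}$ for a suitable power $c$; one must also control the size of functions $h$ in the unit RKHS ball outside this radius. The key observation is that elements of $\bbH^b_1$ are uniformly bounded on all of $\bbR^d$ (indeed $|h(x)| \le \|h\|_{\bbH^b} \sqrt{k(x,x)} = \|h\|_{\bbH^b} \le 1$ since $k(x,x)=1$ for the squared exponential kernel), so the contribution of the region $\|x\| > R_\eps$ to $\|h\|_{2,G}^2$ is at most $G(\|x\|>R_\eps) \lesssim \eps^{c}$, which is negligible after adjusting constants. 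Therefore a cover of the truncated RKHS ball in $\|\cdot\|_{2,G}$-norm of radius $\eps/2$ yields a cover of the full ball of radius $\lesssim\eps$.

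Putting these together, I would write $\log N(\eps, \bbH^b_1, \|\cdot\|_{2,G}) \le \log N(c\eps, \bbH^b_1\big|_{\|x\|\le R_\eps}, \|\cdot\|_\infty) \lesssim R_\eps^d\, b^{-d} (\log(1/\eps))^{1+d} \lesssim b^{-d} (\log(1/\eps))^{d/2}(\log(1/\eps))^{1+d} = b^{-d}(\log(1/\eps))^{1+3d/2}$, which is exactly the claimed exponent $\kappa = 1+3d/2$. I would need to be a little careful about how the constant in the truncated entropy bound from \cite{vaart2009} depends on $R$ — tracking that dependence is what fixes the exponent of the logarithm, and getting precisely $3d/2$ rather than something larger is where the bookkeeping matters.

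The step I expect to be the main obstacle is making the dependence on the truncation radius $R$ explicit in the uniform-norm entropy bound for the RKHS ball of the squared exponential process: the cited results are usually stated for a fixed compact set, so I would either have to reprove the relevant estimate keeping track of the $R$-scaling (via a covering of $\{\|x\|\le R\}$ by $O((R/b)^d)$ small cubes together with an approximation by finitely many Hermite-type basis functions per cube, counting roughly $(\log(1/\eps))^d$ of them), or invoke a version of the argument in which $R$ appears as an explicit polynomial factor. Once that scaling is in hand, balancing $R_\eps \asymp (\log(1/\eps))^{1/2}$ against the tail bound is routine.
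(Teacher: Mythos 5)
Your proposal is correct and follows essentially the same route as the paper's proof: truncate at a radius $a \asymp \sqrt{\log(1/\eps)}$ chosen via the sub-Gaussian tail bound, use the uniform boundedness of functions in $\bbH^b_1$ (which the paper gets from the spectral representation, you from the reproducing property with $k(x,x)=1$ — equivalent facts) to make the tail contribution negligible, and invoke the sup-norm entropy bound on the ball, giving $(a/b)^d(\log(1/\eps))^{1+d} \lesssim b^{-d}(\log(1/\eps))^{1+3d/2}$. The step you flag as the main obstacle is not one in practice: Lemma 4.5 of \cite{vaart2009}, which the paper cites, already states the entropy bound with the radius dependence explicit, so no re-derivation of the $R$-scaling is needed.
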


\begin{proof}
Let $\mu^b(d\lambda) = (2\pi^{1/2}/b)^{-d}\exp(-\|b\lambda \|^2/4)\,d\lambda$ be the spectral measure of the process $f$. 
By Lemma 4.1 of \cite{vaart2009}
the RKHS of the process is the collection $\bbH^b$ of (real parts of) all functions 
of the form 
\[
h_\psi(x) = \int e^{i\qv{\lambda, x}}\psi(\lambda)\,\mu^b(d\lambda),
\]
with $\psi \in L^2(\mu^b)$, and $\|h_\psi\|_{\bbH^b} = \|\psi\|_{L^2(\mu^b)}$. 
By Cauchy-Schwarz  and the fact that $\mu^b(B) = \mu^1(b B)$ all functions in the RKHS unit ball $\bbH^b_1$ are uniformly bounded by 
$C = \sqrt{\mu^1(\bbR^d)}$. It follows that for $h_1, h_2 \in \bbH^b_1$ and $a > 0$ we have 
\[
\|h_1-h_2\|_{2, G} \le \sup_{\|x\| \le a}|h_1(x) - h_2(x)| + \sqrt{2} C \sqrt{G(x: \|x\|> a)}.
\]
The sub-Gaussianity assumption implies that for $a$ a large enough multiple of  $\sqrt{\log (1/\eps)}$ we have
 $G(x: \|x\|> a) \le \eps^2/(2C^2)$, so that 
\[
\log N\Big(2\eps, \bbH^b_1, L^2(\calX,G)\Big) \le \log N(\eps, \bbH^b_1, \ell^\infty[-a,a]). 
\]
By Lemma 4.5 of \cite{vaart2009} the entropy on the right is bounded by a constant times $(a/b)^d (\log (1/\eps))^{1+d}$.
\end{proof}

Using the well-known connection between the entropy of the RKHS unit ball and 
the small ball probabilities of a centered Gaussian process as in Lemma 4.6 of \cite{vaart2009}, 
we obtain the following small ball estimate from Lemma \ref{lem: entropy}.

\bigskip

\begin{lemma}\label{lem: smallball}
Let $f$ be the squared exponential process with covariance function \eqref{eq:sqexp}
and suppose that $G$ satisfies the sub-Gaussian tail bound \eqref{eq: sg}. 
There exist a constant $K > 0$ such that for all small enough $\eps> 0$ 
\[
-\log \Pi(f: \|f\|_{2, G} \le \eps) \le K b^{-d}\big(-\log (b\eps)\big)^{\kappa},
\]
where $\kappa = 1+3d/2$. 
\end{lemma}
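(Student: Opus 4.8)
The plan is to derive this small ball bound directly from the RKHS-entropy estimate of Lemma \ref{lem: entropy}, using the classical duality between the metric entropy of the unit ball of the reproducing kernel Hilbert space and the centered small ball probability of a Gaussian random element --- the Kuelbs--Li inequality, in the form underlying Lemma 4.6 of \cite{vaart2009}. Concretely, for a centered Gaussian $f$ with RKHS $\bbH$ and RKHS unit ball $\bbH_1$, Borell's isoperimetric inequality applied to an $\eps$-net of the inflated ball $\lambda\bbH_1$ yields, after optimising over $\lambda$, a bound of the form $-\log\Pi(f:\|f\|_{2,G}\le\eps)\lesssim\psi(c\eps)$ whenever $\log N(\eps,\bbH_1,\|\cdot\|_{2,G})\le\psi(\eps)$ for a sufficiently regular decreasing $\psi$ and $c$ an absolute constant. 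Plugging in $\psi(\eps)=Kb^{-d}(\log(1/\eps))^{\kappa}$ from Lemma \ref{lem: entropy} then produces a bound of the advertised order.

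The one point requiring care is that all constants and $\eps$-thresholds must be uniform in the length scale $b\le 1$, because the entropy function $\psi$ itself carries a factor $b^{-d}$. The source of this, and of the fact that the conclusion is most naturally stated with $-\log(b\eps)$ rather than $\log(1/\eps)$, is the scaling identity $\mu^b(B)=\mu^1(bB)$ for the spectral measure, equivalently the distributional equality $f(\cdot)\stackrel{d}{=}f^{(1)}(\cdot/b)$ between the length-scale-$b$ and the unit-scale squared exponential processes: up to the volume factor $b^{-d}$, the $b$-scale process observed at resolution $\eps$ behaves like the unit-scale process observed at resolution $b\eps$. Since $b\le 1$ gives $-\log(b\eps)\ge-\log\eps$, the quantity $-\log(b\eps)$ dominates both $\log(1/\eps)$ and the $\log(1/b)$-type corrections the argument generates, which lets one absorb everything into an absolute constant $K$. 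In practice I would rerun the proof of Lemma 4.6 of \cite{vaart2009} almost verbatim, feeding it the $\|\cdot\|_{2,G}$-entropy bound of Lemma \ref{lem: entropy} instead of the sup-norm entropy used there; the exponent rises from $1+d$ to $\kappa=1+3d/2$ precisely because Lemma \ref{lem: entropy} carries the extra factor $(\log(1/\eps))^{d/2}$ coming from the sub-Gaussian truncation radius $a\asymp\sqrt{\log(1/\eps)}$.

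The main obstacle is exactly this uniform-in-$b$ bookkeeping: the Kuelbs--Li inequality as usually quoted conceals constants that depend on the entropy function, so it cannot be applied as a black box when that function varies with $n$ through $b=b_n\to 0$; the isoperimetric argument has to be run with the explicit estimate so that $K$ does not degrade as $b\to 0$. The remaining ingredients --- reducing $\|\cdot\|_{2,G}$-balls to sup-norm balls on a slowly growing cube and the elementary passage from $(\log(1/\eps))^{\kappa}$ to $(-\log(b\eps))^{\kappa}$ --- are routine.
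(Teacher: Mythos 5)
Your proposal matches the paper's argument: the paper likewise obtains Lemma \ref{lem: smallball} directly from the entropy bound of Lemma \ref{lem: entropy} via the entropy--small-ball duality underlying Lemma 4.6 of \cite{vaart2009}, and your additional remarks on uniformity in $b$ and on why $-\log(b\eps)$ appears are consistent with how that argument is meant to be run. No gap to report.
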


\bigskip

The following lemma deals with the approximation term in the concentration function \eqref{eq: conf}. It follows from the proof of Lemma 4.3 of \cite{vaart2009}. 

\begin{lemma}\label{lem: decentering}
Let $f$ be the squared exponential process with covariance function \eqref{eq:sqexp}
and suppose that $G$ satisfies the sub-Gaussian tail bound \eqref{eq: sg}. Let $\bbH^b$ be 
the RKHS of $f$. If $f_0 \in L^2(\bbR^d) \cap C^\alpha(\bbR^d)$ for $\alpha > 0$, then there exist constants $K_1, K_2 > 0$ 
such that 
\[
\inf_{h\in \bbH^b:\|h-f_0\|_{2, G} \le K_1b^{\alpha}} \|h\|^2_{\bbH^b} \le K_2 b^{-d}
\]
for all $b > 0$ small enough.
\end{lemma}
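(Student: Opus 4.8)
The plan is to exhibit a single good element $h\in\bbH^b$ explicitly --- a band-limited smoothing of $f_0$ at resolution $b$ --- and then estimate $\|h\|_{\bbH^b}$ and $\|h-f_0\|_{2,G}$ separately. This is exactly the strategy behind the proof of Lemma 4.3 of \cite{vaart2009}; the only thing to adapt is that the approximation error is now measured in $\|\cdot\|_{2,G}$ globally on $\bbR^d$ rather than in a supremum norm on a compact set. Recall from Lemma 4.1 of \cite{vaart2009} (already used in the proof of Lemma \ref{lem: entropy}) that $\bbH^b$ consists of the real parts of the functions $h_\psi(x)=\int e^{i\ip{\lambda,x}}\psi(\lambda)\,\mu^b(d\lambda)$ with $\psi\in L^2(\mu^b)$, that $\|h_\psi\|_{\bbH^b}=\|\psi\|_{L^2(\mu^b)}$, and that $\mu^b$ has the even Gaussian Lebesgue density $m_b(\lambda)=(2\pi^{1/2}/b)^{-d}e^{-\|b\lambda\|^2/4}$. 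The crucial observation is that on the ball $\{\|\lambda\|\le 2/b\}$ we have $1/m_b(\lambda)=(2\pi^{1/2}/b)^{d}e^{\|b\lambda\|^2/4}\le (2\pi^{1/2})^d e\, b^{-d}$.

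Fix a radial $C^\infty$ cut-off $\chi:\bbR^d\to[0,1]$ with $\chi\equiv 1$ on the unit ball and $\chi\equiv 0$ outside the ball of radius $2$, let $\phi$ be its inverse Fourier transform --- a Schwartz function with $\int\phi(y)\,dy=1$ and all moments of order $\ge 1$ vanishing, since $\chi$ is constant near the origin --- and set $\phi_b(x)=b^{-d}\phi(x/b)$ and $h=f_0*\phi_b$. Since $f_0\in L^2(\bbR^d)$ and $\phi_b\in L^1(\bbR^d)$, the function $h$ is bounded and continuous, and up to a dimensional constant its Fourier transform is $\widehat{f_0}\,\chi(b\,\cdot)$; hence $\widehat h$ is supported in $\{\|\lambda\|\le 2/b\}$ and $|\widehat h|\lesssim|\widehat{f_0}|$ pointwise. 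Taking $\psi(\lambda)=\widehat h(-\lambda)/m_b(\lambda)$ (which is in $L^2(\mu^b)$ because $m_b$ is even and $\widehat h$ is compactly supported) identifies $h$ with $h_\psi$, so $h\in\bbH^b$, and
\[
\|h\|_{\bbH^b}^2=\|\psi\|_{L^2(\mu^b)}^2=\int\frac{|\widehat h(\lambda)|^2}{m_b(\lambda)}\,d\lambda\ \le\ \Big(\sup_{\|\lambda\|\le 2/b}\frac1{m_b(\lambda)}\Big)\|\widehat h\|_2^2\ \lesssim\ b^{-d}\|f_0\|_2^2,
\]
which is the desired bound $\|h\|_{\bbH^b}^2\le K_2 b^{-d}$ with $K_2$ depending only on $d$ and $\|f_0\|_2$.

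It remains to control $\|h-f_0\|_{2,G}$. Since $G$ is a probability measure, $\|h-f_0\|_{2,G}\le\|h-f_0\|_\infty$, so here we only use that $G(\bbR^d)=1$ and not the sub-Gaussian tail bound (which is inherited from the companion lemmas). It therefore suffices to bound $\|f_0*\phi_b-f_0\|_\infty$, and this is the one genuinely quantitative input, namely the supremum-norm estimate contained in the proof of Lemma 4.3 of \cite{vaart2009}: because $\phi$ is rapidly decreasing, has vanishing moments of orders $1,\dots,\lceil\alpha\rceil-1$, and satisfies $\int\|y\|^\alpha|\phi(y)|\,dy<\infty$, a Taylor expansion of $f_0$ at scale $b$ combined with the uniform Hölder bound on its top-order derivatives gives $\|f_0*\phi_b-f_0\|_\infty\le K_1 b^\alpha$ for all small $b>0$. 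Combining this with the RKHS-norm bound completes the proof. I expect the only delicate point to be the bookkeeping that $h$ genuinely lies in $\bbH^b$ with the stated norm: there the choice of frequency cut-off at level $\asymp 1/b$ is exactly what keeps the factor $e^{\|b\lambda\|^2/4}$ bounded on the support of $\widehat h$ while still letting $\phi_b$ approximate the identity at rate $b^\alpha$.
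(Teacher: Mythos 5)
Your proposal is correct and follows essentially the same route as the paper, which simply defers to the proof of Lemma 4.3 of \cite{vaart2009}: that proof is exactly your construction $h=f_0*\phi_b$ with $\phi$ the inverse Fourier transform of a smooth compactly supported cut-off equal to one near the origin, giving the frequency support in a ball of radius of order $1/b$ (hence $\|h\|_{\bbH^b}^2\lesssim b^{-d}$) and the $b^\alpha$ sup-norm approximation via vanishing moments. Your only adaptations---bounding $\|h-f_0\|_{2,G}$ by the global sup-norm since $G$ is a probability measure, and noting the sub-Gaussian tail condition is not needed for this lemma---are precisely what the paper's citation implicitly relies on.
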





\bibliography{references-v1}

\end{document}